\documentclass[12pt, reqno]{amsart}
\usepackage{amsmath, amsthm, amscd, amsfonts, amssymb, graphicx, color, enumerate, enumitem, bm, bbm, todonotes, xfrac}

\usepackage[bookmarksnumbered, colorlinks, plainpages]{hyperref}

\usepackage{tkz-euclide}

\usepackage[capitalize,nameinlink,noabbrev,nosort]{cleveref}
\usepackage{thmtools}
\theoremstyle{plain}
\newtheorem{theorem}{Theorem}[section]
\newtheorem{prop}[theorem]{Proposition}
\newtheorem{lemma}[theorem]{Lemma}

\newtheorem{conjecture}[theorem]{Conjecture}
\theoremstyle{definition}
\newtheorem{definition}[theorem]{Definition}
\newtheorem{example}[theorem]{Example}
\newtheorem{question}[theorem]{Question}
\newtheorem{remark}[theorem]{Remark}
\crefname{definition}{Definition}{Definitions}
\crefname{theorem}{Theorem}{Theorems}
\crefname{lemma}{Lemma}{Lemmas}
\crefname{example}{Example}{Examples}

\renewcommand{\P}{\mathcal{P}}

\newcommand{\E}{\mathcal{E}}
\newcommand{\U}{\mathcal{U}}

\newcommand{\Q}{\mathcal{Q}}

\newcommand{\ZZ}{\mathbb{Z}}
\newcommand{\QQ}{\mathbb{Q}}
\newcommand{\RR}{\mathbb{R}}

\newcommand{\leftb}{\{\!\!\{}
\newcommand{\rightb}{\}\!\!\}}

\DeclareMathOperator{\pre}{pre}

\DeclareMathOperator{\prh}{prh}
\DeclareMathOperator{\prf}{prf}
\DeclareMathOperator{\prs}{prs}

\DeclareMathOperator{\flip}{flip}
\DeclareMathOperator{\supp}{supp}

\DeclareMathOperator{\rev}{rev}
\DeclareMathOperator{\term}{term}
\DeclareMathOperator{\dist}{dist}
\DeclareMathOperator{\lcm}{lcm}

\begin{document}
\setcounter{page}{1}

%------------------------------------------------------------------------------

%Title of the paper
\title[Counterexamples regarding elementary symmetric partitions]{Counterexamples regarding \\ elementary symmetric partitions}

%Author names in alphabetical order
\author[V. Hadelyn, H. Niergarth, W. Li, W. Li]{Vixail Hadelyn, Harper Niergarth, Weiyou Li \MakeLowercase {and} Wenhui Li}

\address{University of Waterloo, Ontario Canada.}
\email{\textcolor[rgb]{0.00,0.00,0.84}{lynHadelyn@gmail.com}}
\address{University of Waterloo, Ontario Canada.}
\email{\textcolor[rgb]{0.00,0.00,0.84}{hniergarth@uwaterloo.ca}}
\address{University of Waterloo, Ontario Canada.}
\email{\textcolor[rgb]{0.00,0.00,0.84}{w43li@uwaterloo.ca}}
\address{University of Waterloo, Ontario Canada.}
\email{\textcolor[rgb]{0.00,0.00,0.84}{w228li@uwaterloo.ca}}

%Abstract, keywords, math subject classification
\begin{abstract}
Ballantine, Beck, and Merca defined the elementary symmetric partition map $\pre_j$ that sends a partition $\lambda$ to a larger partition whose parts are the summands appearing in the evaluation of the $j$-th elementary symmetric polynomial on $\lambda$. They conjectured that $\pre_j$ is injective on the set of partitions of $n$ with length $\ell \geq j$. The $\ell = j$ case was disproved by Devnani and Eyyunni; they instead conjectured the statement to be true for $\ell > j$. In this article, we answer this refined conjecture in the negative by proving that $\pre_j$ is not injective on partitions of $n$ with length $2j$ for $j \geq 3$. We also prove that the analogous map $\prh_j$ defined via the complete homogenous symmetric polynomial is injective on the set of all partitions.  
\end{abstract} \maketitle

%------------------------------------------------------------------------------

\section{Introduction}

Throughout, let $n$ be a finite positive integer. A \textbf{partition} $\lambda = (\lambda_1,\ldots,\lambda_\ell)$ is a weakly decreasing sequence of positive integers. Each $\lambda_i$ is called a \textbf{part} of $\lambda$, the number of parts $\ell = \ell(\lambda)$ is called the \textbf{length} of $\lambda$, and the sum of parts $\lambda_1 + \cdots + \lambda_\ell = |\lambda|$ is called the \textbf{size} of $\lambda$. We identify a partition $\lambda$ with its multiset of parts $\leftb\lambda_i\rightb_{1 \leq i \leq \ell}$.

The \textbf{$j$-th elementary symmetric polynomial} is defined as,
    \begin{align*}
        e_j(x_1,\ldots,x_\ell) = \sum_{1 \leq i_1 < \cdots < i_j \leq \ell} x_{i_1}\cdots x_{i_j},
    \end{align*}
for $1 \leq j \leq \ell$. In \cite{ballantine2025partitions}, Ballantine, Beck, and Merca defined a function $\pre_j$ which sends a partition $\lambda = (\lambda_1,\ldots, \lambda_\ell)$ to the partition whose multiset of parts is,
    \[
        \leftb \lambda_{i_1}\cdots\lambda_{i_j} : 1\le i_1 < i_2 <\cdots<i_j \le \ell\rightb,
    \]
for $1\leq j \leq \ell$. They called the resulting partition $\pre_j(\lambda)$ an \textbf{elementary symmetric partition} since its parts are also the summands appearing in the evaluation $e_j(\lambda_1,\ldots,\lambda_\ell)$. 

For example, take the partition $\lambda = (4,3,3,1)$ and $j=3$. We evaluate $e_3(\lambda)$ as
    \begin{align*}
        e_3(4,3,3,1) = 4\cdot 3 \cdot 3 + 4\cdot 3 \cdot 1 + 4\cdot 3 \cdot 1 + 3\cdot 3 \cdot 1,
    \end{align*}
and so,
    \begin{align*}
        \pre_3(4,3,3,1) = (36,12,12,9).
    \end{align*}

The authors of \cite{ballantine2025partitions} made the following conjecture regarding the function $\pre_j$.

\begin{conjecture}[Conjecture 13 in \cite{ballantine2025partitions}, Conjecture 1 in \cite{ballantine2024elementary}]\label{conj:their conjecture}
    For $j\geq 2$, the function $\pre_j$ is injective on the set of partitions of size $n$ and length $\ell \geq j$.
\end{conjecture}

Li \cite{li2026department} proved the $j=2$ case of \cref{conj:their conjecture}. In fact, they proved a stronger statement where one replaces partitions with weakly decreasing sequences of positive real numbers; \textit{real partitions}. More recently,  Devnani and Eyyunni \cite{devnani2026elementary} disproved the $\ell = j$ case of \cref{conj:their conjecture}, instead conjecturing that the statement is true for $\ell > j$ \cite[Conjecture 1.4]{devnani2026elementary}. In related work \cite{ballantine2026partitions}, the authors considered a class of partitions that is neither a subset nor a superset of partitions of size $n$ and proved that $\pre_j$ is injective on this class. While not answering \cref{conj:their conjecture}, their result provided further evidence towards it.

In this article, we answer the $\ell=2j$ case of \cref{conj:their conjecture} in the negative, also disproving Devnani and Eyyunni's refined conjecture \cite[Conjecture 1.4]{devnani2026elementary}. Our main result is the following theorem.

\begin{theorem}\label{thm:main}
    For each $j\geq 3$, there are infinitely many pairs of distinct partitions $\lambda$ and $\mu$ of length $2j$ such that $|\lambda| = |\mu|$ and $\pre_j(\lambda) = \pre_j(\mu)$.
\end{theorem}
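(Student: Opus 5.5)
The plan is an explicit construction that exploits complementation of $j$-subsets of a $2j$-element index set. Let $\lambda=(\lambda_1,\ldots,\lambda_{2j})$ with $P=\lambda_1\cdots\lambda_{2j}$, and suppose there is a positive integer $c$ with $c^j=P$ and $c/\lambda_i\in\ZZ$ for all $i$. Define $\mu$ to be the partition with parts $\mu_i=c/\lambda_i$. For a $j$-subset $S$ with complement $S^c$,
\[
\prod_{i\in S}\mu_i=\frac{c^j}{\prod_{i\in S}\lambda_i}=\frac{P}{\prod_{i\in S}\lambda_i}=\prod_{i\in S^c}\lambda_i .
\]
Since $S\mapsto S^c$ is a bijection on $j$-subsets of $\{1,\ldots,2j\}$, the multisets of $j$-fold products agree, so $\pre_j(\mu)=\pre_j(\lambda)$. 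The theorem then reduces to the following Diophantine problem: find $2j$ divisors $x_1,\ldots,x_{2j}$ of some $c$ such that
\[
\prod_i x_i=c^j,\qquad \sum_i x_i=\sum_i c/x_i ,
\]
and the multiset $\leftb x_i\rightb$ is not invariant under $x\mapsto c/x$. This last condition is what makes $\lambda\neq\mu$.

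The second step reduces to a fixed small core. A complementary pair $\{d,c/d\}$ with $d\mid c$ has product $c$, contributes $0$ to $\sum_i (x_i-c/x_i)$, and is mapped to itself by $x\mapsto c/x$. So if a core of $6$ parts solves the problem for $j=3$, I can append $j-3$ such pairs, for instance $j-3$ copies of $\{1,c\}$, and obtain a solution of length $2j$ for every $j\ge 3$. Infinitude follows by scaling: replacing $(\lambda,\mu,c)$ by $(k\lambda,k\mu,k^2c)$ preserves all three conditions, because the product scales by $k^{2j}=(k^2)^j$ and both sums scale by $k$. This gives infinitely many pairs for each fixed $j$.

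The third step, finding the core, is the main obstacle. A sanity check shows why $j=2$ cannot work, consistent with Li's theorem. Take a core $\{a,b,e,t\}$ with $c=t^2$; the conditions become $abe=t^3$ and $t(a+b+e)=ab+be+ea$. These force $x=t$ to be a root of $(x-a)(x-b)(x-e)$, so the core collapses to complementary pairs. Hence a genuine $6$-part core is needed. I would search small $c$ with many divisors, such as $c=36,72,144,360$. For each divisor $d$ set $f(d)=d-c/d$, and seek $6$ divisors with $\sum f=0$ and product $c^3$ that are not closed under $d\mapsto c/d$. For $c=36$ some naive attempts, such as $\{4,4,4,9,9,9\}$, turn out to be reciprocal-closed, so the search must explicitly exclude these.

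If a hand search fails, I would relax the construction to $\mu_i=c/\lambda_{\sigma(i)}$ with a different integrality bookkeeping, or allow $c^j=P$ with $c$ not a perfect square. I would then run a short computer search and record one explicit core, verifying its three conditions directly in the proof.
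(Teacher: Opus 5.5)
Everything except your third step is sound. The complementation identity $\prod_{i\in S}\mu_i=\prod_{i\in S^c}\lambda_i$ is exactly the paper's \cref{lemma:flip equality} for $\ell=2j$, since your $\mu$ is $\flip_j(\lambda)$. Appending pairs $\{1,c\}$ preserves the product condition, the sum identity, and non-closure under $x\mapsto c/x$; the last follows by multiset cancellation. Scaling to $(k\lambda,k\mu,k^2c)$ gives infinitely many pairs.

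\textbf{The gap.} The proof stands or falls on the existence of a $6$-part core, and you never produce one. ``I would search'' is not an argument, and this is the entire non-formal content of the theorem.

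The search is also less innocent than it looks. For $c=36$, write $p_d=n_d-n_{36/d}$ for the multiplicities. The two exponent conditions and $\sum(x_i-36/x_i)=0$ then force $(p_{36},p_{18},p_{12},p_9)=t(1,-6,4,5)$. So any non-closed solution needs at least $16$ parts. The same bookkeeping rules out $c=72$ and $c=144$.

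The gap does close at the last value on your list. Take $\lambda=(45,24,24,20,10,9)$, which has product $360^3$. Then $\mu=(40,36,18,15,15,8)$ is the multiset $\leftb 360/\lambda_i\rightb$, and $|\lambda|=|\mu|=132$. The paper's final remark also records $(90,50,45,25,12,12)$ with $c=900$ and common size $234$. With such a witness written down and checked, your argument is a complete proof.

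A side point: your $j=2$ ``sanity check'' only treats cores containing $\sqrt c$. In general, once $\prod x_i=c^2$ and the sums agree, all elementary symmetric functions of $\leftb x_i\rightb$ and $\leftb c/x_i\rightb$ coincide. You do not need this for the theorem anyway.

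\textbf{Comparison with the paper.} The paper does not look for a single core. For each $j$ it takes an Eulerian digraph with $j$ edges on vertices in $(-\frac{\pi}{4},\frac{\pi}{4})$ and assigns each edge the two numbers $h^\pm(u,v)$. Normalization and (P1) then hold automatically by telescoping over the Eulerian condition. It obtains (P2) by wiggling vertices inside a dense set and invoking real-analyticity, and $\Psi$ clears denominators.

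Your route, once completed, is much shorter and checkable by hand. Its infinitude is cheap, though: all your examples for a given $j$ are scalar multiples of one pair, and for $j>3$ they are the $j=3$ core plus inert pairs. The paper's construction yields, for every $j$, infinitely many pairwise non-proportional examples coming directly from $j$-edge graphs rather than from one fixed core. It also gives a practical recipe for generating them via random rational embeddings.
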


As with the results of Li \cite{li2026department}, our results may also be extended to real partitions (see \cref{rmk:real}); a phenomenon we expect to hold for many results on $\pre_j$. In \cite[Section 4]{ballantine2026partitions}, the authors related elementary symmetric partitions to plethysm, a fundamental operation in the theory of symmetric polynomials. In particular, they pointed out that a consequence of the injectivity of $\pre_2$ \cite{li2026department} is that the family of plethysms $\{e_k[e_2]\}_{k\geq 1}$ determine the original elementary symmetric polynomials $\{e_k\}_{k \geq 1}$ (in finitely many variables $x_1,\ldots,x_\ell$). As a corollary of \cref{thm:main}, we now know that this is \textit{not} true of the family of plethysms $\{e_k[e_j]\}_{k\geq 1}$ for $j \geq 3$. We refer the reader to \cite{macdonald1998symmetric} for more on plethysm.

Our proof of \cref{thm:main} involves defining a map that we call $\flip_j$ (see \cref{def:flip}) that preserves elementary symmetric partitions when $\lambda$ has length $2j$ (see \cref{lemma:flip equality}). We propose the following conjecture regarding the map $\flip_j$.

\begin{conjecture}\label{conj:our conjecture}
    Let $\lambda$ and $\mu$ be distinct partitions of length $\ell$. Then for any $\ell > j \geq 2$, $\pre_j(\lambda) = \pre_j(\mu)$ if and only if $\ell = 2j$ and $\flip_j(\lambda) = \mu$.
\end{conjecture}

Importantly, \cref{conj:our conjecture} would imply that $\pre_j$ is injective on the set of partitions of length $j\leq\ell$ not equal to $j$ or $2j$. This would be strictly stronger than \cref{conj:their conjecture} on partitions not of length $j$ or $2j$ since we do not impose any size requirement.

\begin{remark}
    After completing this article, we learned of similar work by Thomas and Tung \cite{thomas2026injectivitysymmetricpolynomialmaps} that was done independently. They proved that $\pre_3$ is not injective on partitions of $n$ for infinitely many $n$ \cite[Theorem 1.3]{thomas2026injectivitysymmetricpolynomialmaps}. This is equivalent to the $j=3$ case of \cref{thm:main} though our proof methods are distinct. They also proved that $\prh_j$ is injective on the set of all partitions \cite[Theorem 1.5]{thomas2026injectivitysymmetricpolynomialmaps} which we also proved in \cref{thm:homogenous}. Though there are overlaps in our works, we highlight that their results were derived independently from ours.
\end{remark}

%------------------------------------

\section{The map $\flip_j$}\label{sec:counterexample}

In this section, we define a map on partitions that interacts nicely with elementary symmetric partitions. To ensure this map is well-defined, we expand our consideration to \textbf{real-valued partitions}. Let $\P_\ell(\RR)$ denote the set of all weakly decreasing sequences $\lambda = (\lambda_1,\ldots,\lambda_\ell)$ of positive real numbers. More generally, define $\P_\ell(\mathbb{F})$ analogously for $\mathbb{F} \in \{\ZZ,\QQ\}$. Let $c \cdot \lambda = (c\lambda_1,\ldots,c\lambda_{\ell(\lambda)})$

\begin{definition}\label{def:flip}
    For $1 \leq j \leq \ell$, define the map $\flip_j: \mathcal{P}_\ell(\mathbb{R)} \to \mathcal{P}_\ell(\mathbb{R)}$ by setting
    \[
    \flip_j(\lambda_1, \lambda_2, \ldots, \lambda_\ell) =  \sqrt[j]{\lambda_1\lambda_2\cdots \lambda_\ell}\cdot \left(\frac{1}{\lambda_\ell}, \ldots, \frac{1}{\lambda_2},\frac{1}{\lambda_1}\right)
    \]
    for each $\lambda = (\lambda_1,\ldots,\lambda_\ell) \in \P_\ell(\RR)$.
\end{definition}

Note that $\sfrac{1}{\lambda_\ell} \geq \cdots \geq \sfrac{1}{\lambda_2} \geq \sfrac{1}{\lambda_1}$ and so $\flip_j(\lambda)$ is indeed a real-valued partition. Due to our identification of partitions with their multisets of parts, we allow $\flip_j$ to take in multisets as well. In such a case, the order of input/output does not matter.

Before beginning our discussion of $\flip_j$, we introduce helpful notation. For any tuple $\alpha \in \ZZ_{\geq 0}^\ell$, we associate a monomial $x^\alpha = x_1^{\alpha_1}\cdots x_\ell^{\alpha_\ell}$. We let $\lambda^\alpha = \lambda_1^{\alpha_1} \cdots \lambda_\ell^{\alpha_\ell}$ denote the evaluation of $\lambda \in \P_\ell(\RR)$ on the monomial $x^\alpha$. We define the \textbf{support} of a polynomial $f$ as the set $\supp(f) = \{\alpha:[x^\alpha]f \neq 0\}$. Lastly, we let $\E(\ell,j)$ denote $\supp(e_j(x_1,\ldots,x_\ell))$, which consists of every tuple of length $\ell$ that has precisely $j$ ones and zeros everywhere else.  With this notation, we have that $\pre_j(\lambda)$ is the partition whose multiset of parts is $\leftb \lambda^\alpha : \alpha \in \E(\ell,j)\rightb$.

\vspace{4pt}

The following lemma characterizes how $\flip_j$ interacts with $\pre_j$.

\begin{lemma}{\label{lemma:flip equality}}
    Let $\lambda \in \P_\ell(\RR)$. Then, $\pre_j (\flip_j (\lambda)) = \pre_{\ell-j} (\lambda)$. In particular, if $ \ell = 2j$, then $\pre_j (\flip_j (\lambda)) = \pre_j (\lambda)$.
\end{lemma}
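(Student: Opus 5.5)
Write $\mu = \flip_j(\lambda)$ and $P = \lambda_1\lambda_2\cdots\lambda_\ell$. As a multiset, $\mu = \leftb P^{1/j}/\lambda_i : 1 \le i \le \ell \rightb$; the reversal of order in \cref{def:flip} plays no role here. The plan is to compute the parts of $\pre_j(\mu)$ directly in the monomial notation and then match them bijectively with the parts of $\pre_{\ell-j}(\lambda)$ via complementation of supports.

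Fix $\alpha \in \E(\ell,j)$, and let $S = \{i : \alpha_i = 1\}$, a $j$-subset of $\{1,\ldots,\ell\}$. The part of $\pre_j(\mu)$ indexed by $\alpha$ is
\[
\prod_{i \in S} \frac{P^{1/j}}{\lambda_i} = \frac{P}{\prod_{i\in S}\lambda_i} = \prod_{i \notin S} \lambda_i = \lambda^{\mathbf{1}-\alpha},
\]
where $\mathbf{1}=(1,\ldots,1)$. This is exactly where the $j$-th root is needed: the factor $P^{1/j}$ appears $j$ times and contributes $P$. The map $\alpha \mapsto \mathbf{1}-\alpha$ is a bijection $\E(\ell,j) \to \E(\ell,\ell-j)$. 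Hence
\[
\leftb \mu^\alpha : \alpha \in \E(\ell,j) \rightb = \leftb \lambda^\beta : \beta \in \E(\ell,\ell-j)\rightb
\]
as multisets, so $\pre_j(\mu) = \pre_{\ell-j}(\lambda)$. Setting $\ell=2j$ gives $\ell - j = j$, which is the stated special case.

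The only delicate point is the bookkeeping with indices. Because $\mu$ is listed in reversed order, its $i$-th entry is $P^{1/j}/\lambda_{\ell+1-i}$, not $P^{1/j}/\lambda_i$. To avoid this, I would work with multisets throughout, as the paper allows, or equivalently compose with the index reversal $i\mapsto \ell+1-i$. This reversal is itself a bijection of $\E(\ell,j)$ to itself, so it does not change the resulting multiset. With that handled, there is no real obstacle.
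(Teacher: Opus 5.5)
Your proof is correct and is essentially the paper's argument. The paper computes $\flip_j(\lambda)^\alpha = \lambda^{f_j(\alpha)}$ with $f_j(\alpha) = (1-\alpha_\ell,\ldots,1-\alpha_1)$, using $\sum_i \alpha_i = j$ to turn the $j$ factors of $(\lambda_1\cdots\lambda_\ell)^{1/j}$ into the full product, and then uses that $f_j$ is a bijection $\E(\ell,j)\to\E(\ell,\ell-j)$. The only difference is cosmetic: the paper folds the index reversal into $f_j$, whereas you handle it separately by working with multisets and using plain complementation $\alpha\mapsto\mathbf{1}-\alpha$.
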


\begin{proof}
    For all $1 \leq j \leq \ell$, define the map $f_j : \E(\ell,j) \to \E(\ell, \ell-j)$ by
    \[
    f_j(\alpha_1, \ldots, \alpha_\ell) = (1-\alpha_\ell, \cdots, 1-\alpha_1).
    \]  
    A simple calculation shows that $f_j^{-1} = f_{\ell-j}$ and so $f_j$ is a bijection. 
    
    We claim that $\flip_j(\lambda)^\alpha = \lambda ^ {f_j (\alpha)}$. First observe that the $i$-th coordinate of $\flip_j(\lambda)$ is $(\flip_j(\lambda))_i
    = (\lambda_1\cdots\lambda_\ell)^{1/j}\frac{1}{\lambda_{\ell+1-i}}$.
    Therefore,
    \begin{align}\label{eq:flip 1}
    \flip_j(\lambda)^\alpha
    =
    \prod_{i=1}^\ell
    \left(
    \frac{(\lambda_1\cdots\lambda_\ell)^{1/j}}{\lambda_{\ell+1-i}}
    \right)^{\alpha_i}.
    \end{align}
    Since \(\alpha\in \E(\ell,j)\), we have that \(\sum_{\ell=1}^\ell \alpha_\ell=j\). Hence, \eqref{eq:flip 1} is equal to,
    \begin{align}\label{eq:flip 2}
    (\lambda_1\cdots\lambda_\ell)^{\frac{1}{j}\sum_{\ell=1}^\ell \alpha_\ell}\prod_{i=1}^\ell \lambda_{\ell+1-i}^{-\alpha_i}
    =
    (\lambda_1\cdots\lambda_\ell)\prod_{i=1}^\ell \lambda_{\ell+1-i}^{-\alpha_i}.
    \end{align}
    Now, reindexing the product via $k=\ell+1-i$, \eqref{eq:flip 2} is equal to,
    \[
    (\lambda_1\cdots \lambda_\ell)
    \prod_{k=1}^\ell \lambda_k^{-\alpha_{\ell+1-k}} = \prod_{k=1}^\ell \lambda_k^{\,1-\alpha_{\ell+1-k}} = \lambda^ {f_j (\alpha)},
    \]
    proving the claim. Therefore,
    \begin{align*}
        \leftb \flip_j(\lambda)^\alpha : \alpha \in \E(\ell,j) \rightb = \leftb \lambda^{f_j(\alpha)} : \alpha \in \E(\ell,j) \rightb 
        = \leftb \lambda^\beta : \beta \in \E(\ell,\ell-j) \rightb,
    \end{align*}
    where the last equality comes from the fact that $f_j$ is a bijection. It is then immediate that $\pre_j(\flip_j(\lambda)) = \pre_{\ell-j}(\lambda)$. 
    
    Lastly when $n=2j$, $\pre_j(\flip_j(\lambda)) = \pre_{2j-j}(\lambda) = \pre_j(\lambda)$.
\end{proof}

\begin{example}\label{ex:Wen's counterexample}
    Consider the partition $\lambda = (8, 8, 2, 2, 2, 1)$. Then,
        \begin{align*}
        \flip_3(\lambda) = \sqrt[3]{512}\left(\frac{1}{1},\frac{1}{2},\frac{1}{2},\frac{1}{2},\frac{1}{8},\frac{1}{8}\right) =  (8, 4, 4, 4, 1, 1).
        \end{align*}
    One can check that $\pre_3(\lambda) =\pre_3(\flip_j(\lambda)) = \nu$ where,
    \begin{align*}
        \nu = (128,128,128,64,32,32,32,32,32,32,16,16,16,16,16,16,8,4,4,4).
    \end{align*}
    Note that this pair does not disprove \cref{conj:their conjecture} as $|\flip_3(\lambda)| = 22 \neq 25 = |\lambda|$.
\end{example}

\begin{remark}
    In \cite{cimpoeas2025remarks}, the authors prove that if $\lambda$ and $\mu$ are $d$-ary\footnote{This means that the parts of $\lambda$ are all powers of $d$.} partitions of length $\ell$ (with possibly different sizes), such that $\pre_j(\lambda) = \pre_j(\mu)$ and $\lambda_{i_1}\cdots \lambda_{i_j} = \mu_{i_1}\cdots \mu_{i_j}$ for all $1 \leq i_1 <\cdots < i_j\leq \ell$, then $\lambda = \mu$. The partitions $\lambda$ and $\flip_j(\lambda) = \mu$ in \cref{ex:Wen's counterexample} are examples of distinct $2$-ary (or binary) partitions satisfying $\pre_3(\lambda) = \pre_3(\mu)$ but not the product condition. Indeed, one of the (many) choices of $i_1,i_2,i_3$ that fails is $3,4,5$:
        \begin{align*}
            \lambda_3\lambda_4\lambda_5 = 2\cdot 2\cdot 2 \neq 4 \cdot 4 \cdot 1 = \mu_3\mu_4 \mu_5.
        \end{align*}
    Therefore, the product condition is necessary for their result to hold.
\end{remark}

Our next (more challenging) step is to find partitions $\lambda$ of length $2j$ that have the same size as their image $\flip_j(\lambda)$. A fruitful observation is that this is equivalent to satisfying the algebraic equation,
    \begin{align}\label{eq:size eq}
        \sum_{i=1}^{2j}\lambda_i = \sqrt[j]{\lambda_1\lambda_2 \cdots \lambda_{2j}}\sum_{i=1}^{2j}\frac{1}{\lambda_i}.
    \end{align}

\begin{remark}
    Rearranging terms in \eqref{eq:size eq}, the equation becomes,
        \begin{align}\label{eq:stat}
            \left(\frac{1}{2j} \sum_{i=1}^{2j}\lambda_i\right) \left(\frac{2j}{\sum_{i=1}^{2j}\frac{1}{\lambda_i}}\right)= \left(\sqrt[2j]{\lambda_1\lambda_2\cdots\lambda_{2j}}\right)^2.
        \end{align}
    The reader who has brushed up on their statistics may notice that \eqref{eq:stat} is equivalent to $\text{AM}(\lambda)\text{HM}(\lambda) = \text{GM}(\lambda)^2$ where AM, GM, and HM are the arithmetic, geometric, and harmonic means respectively. This may be of independent interest due to the well known fact that $\text{AM}(\lambda) \geq \text{GM}(\lambda) \geq \text{HM}(\lambda)$. Hence the solutions to \eqref{eq:size eq} that we will construct in \cref{sec:graphs} ``balance'' this chain of inequalities.
\end{remark}

An immediate concern is the $j$-th root in the definition of $\flip_j$ as this may introduce irrational values. For example, $\flip_2(2,1,1,1) = (\sqrt{2},\sqrt{2},\sqrt{2},\frac{\sqrt{2}}{2})$. The way we will assuage this concern is to work with \textbf{normalized partitions}: partitions whose product of parts is one. Of course, this requires us to work with rational partitions as the only normalized integer partition is $(1,\ldots,1)$. Let $\overline{\P_\ell}(\QQ)$ denote the set of all normalized rational partitions of length $\ell$. Luckily, we may translate back to integer partitions with the following map. 

\begin{prop}\label{prop:normalized}
    Define the map $\Psi:\overline{\P_{2j}}(\QQ) \to \P_{2j}(\ZZ)$ by,
        \begin{align*}
            \Psi\left(\frac{p_1}{q_1},\frac{p_2}{q_2}\ldots,\frac{p_{2j}}{q_{2j}}\right) = Z\cdot \left(\frac{p_1}{q_1},\frac{p_2}{q_2}\ldots,\frac{p_{2j}}{q_{2j}}\right) 
        \end{align*} 
        where $Z = \lcm(p_1,p_2,\ldots,p_{2j},q_1,q_2,\ldots,q_{2j})$. Then,
            \begin{enumerate}
                \item[(a)] $\Psi$ is injective.
                \item[(b)] Both $\Psi(\lambda)$ and $\flip_j(\Psi(\lambda))$ are in $\P_{2j}(\ZZ)$.
                \item[(c)] $|\lambda| = |\flip_j(\lambda)|$ implies $|\Psi(\lambda)| = |\flip_j(\Psi(\lambda))|$.
            \end{enumerate}
\end{prop}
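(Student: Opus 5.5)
The whole proof rests on one scaling identity: when $\ell = 2j$, the map $\flip_j$ is homogeneous of degree one. For $c>0$ and $\lambda \in \P_{2j}(\RR)$, the product of the parts of $c\cdot\lambda$ is $c^{2j}\lambda_1\cdots\lambda_{2j}$. Its $j$-th root is therefore $c^2\sqrt[j]{\lambda_1\cdots\lambda_{2j}}$. Multiplying by the reciprocals $\sfrac{1}{c\lambda_{2j+1-i}}$ gives
\[
\flip_j(c\cdot\lambda) = c\cdot \flip_j(\lambda).
\]
Also, if $\lambda$ is normalized, the $j$-th root equals $1$, so
\[
\flip_j(\lambda) = \left(\sfrac{1}{\lambda_{2j}},\ldots,\sfrac{1}{\lambda_1}\right).
\]
Hence $\flip_j$ maps $\overline{\P_{2j}}(\QQ)$ to itself. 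With these two observations, each of (a), (b) and (c) is a short check.

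Throughout I would assume each part $\sfrac{p_i}{q_i}$ is written in lowest terms with $p_i,q_i>0$, so that $Z$ is a well-defined function of $\lambda$.

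For (b), each $q_i$ divides $Z$, so $Z\cdot\sfrac{p_i}{q_i}$ is a positive integer. Thus $\Psi(\lambda)\in\P_{2j}(\ZZ)$, and the order is preserved because $Z>0$. For the flip, the scaling identity and the normalized formula give
\[
\flip_j(\Psi(\lambda)) = Z\cdot\flip_j(\lambda) = \left(Z\sfrac{q_{2j}}{p_{2j}},\ldots,Z\sfrac{q_1}{p_1}\right).
\]
These entries are integers because each $p_i$ divides $Z$. This is exactly why $Z$ is taken to be the lcm of both the numerators and the denominators.

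For (c), the scaling identity gives $|\Psi(\lambda)| = Z|\lambda|$ and $|\flip_j(\Psi(\lambda))| = Z|\flip_j(\lambda)|$. So equality of $|\lambda|$ and $|\flip_j(\lambda)|$ transfers after multiplying by $Z$.

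For (a), suppose $\Psi(\lambda)=\Psi(\mu)$, with scaling constants $Z$ and $Z'$ respectively. Then $Z\lambda = Z'\mu$. Taking the product of all $2j$ parts, and using that both $\lambda$ and $\mu$ are normalized, gives $Z^{2j} = Z'^{2j}$. Since $Z,Z'>0$, this forces $Z=Z'$, and hence $\lambda=\mu$.

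I expect no real obstacle. The only points needing care are verifying the homogeneity identity, which uses $\ell=2j$ essentially, and making sure $Z$ is well-defined via the lowest-terms convention. Injectivity does not actually depend on how $Z$ is chosen, only on $Z$ being a positive function of $\lambda$.
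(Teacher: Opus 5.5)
Your proof is correct and follows essentially the same route as the paper. The paper's computation in (b), $\sqrt[j]{\prod_i Zp_i/q_i} = Z^2$, is exactly your degree-one homogeneity identity $\flip_j(c\cdot\lambda)=c\cdot\flip_j(\lambda)$ specialized to $c=Z$, and (a) and (c) are argued identically. Your lowest-terms convention is a sensible clarification the paper leaves implicit, since it is what makes $Z$, and hence $\Psi$, well defined.
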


\begin{proof}
    For (a), suppose that $\Psi(\lambda) = \Psi(\mu)$, By definition, $Z\cdot\lambda = Z'\cdot\mu$ where $Z,Z'> 0$. Since $\lambda = \frac{Z'}{Z}\cdot\mu$ is normalized, $\left(\frac{Z'}{Z}\right)^{2j} = 1$. Hence, $Z = Z'$, and $\lambda = \mu$. 

    For (b), $\Psi(\lambda) \in \P_{2j}(\ZZ)$ by choice of $Z$. Further, 
    \begin{align*}
        \flip_j\left(\Psi\left(\frac{p_1}{q_1},\frac{p_2}{q_2},\ldots,\frac{p_{2j}}{q_{2j}}\right)\right) &= \flip_j\left(\frac{Zp_1}{q_1},\frac{Zp_2}{q_2},\ldots,\frac{Zp_{2j}}{q_{2j}}\right) \\
        &= \sqrt[j]{\prod_{i=1}^{2j}\frac{Zp_i}{q_i}}\cdot \left(\frac{q_{2j}}{Zp_{2j}},\ldots,\frac{q_{2}}{Zp_{2}},\frac{q_{1}}{Zp_{1}}\right) \\
        &= Z^2 \cdot\left(\frac{q_{2j}}{Zp_{2j}},\ldots,\frac{q_{2}}{Zp_{2}},\frac{q_{1}}{Zp_{1}}\right) \\
        &= Z \cdot\left(\frac{q_{2j}}{p_{2j}},\ldots,\frac{q_{2}}{p_{2}},\frac{q_{1}}{p_{1}}\right),
    \end{align*}
    and this is in $\P_{2j}(\ZZ)$ again by choice of $Z$.
    
    Lastly, for (c), by the work above, $|\flip_j(\Psi(\lambda))| = Z\cdot|\flip_{j}(\lambda)|$. Thus, if $|\lambda| = |\flip_j(\lambda)|$, then,
        \begin{align*}
            |\flip_j(\Psi(\lambda))| = Z\cdot |\flip_{j}(\lambda)| = Z\cdot |\lambda| = |\Psi(\lambda)|.
        \end{align*}

    \begin{example}
        The rational partition $\lambda = \left( \frac{5}{2}, \frac{12}{5},\frac{4}{3},\frac{4}{7},\frac{1}{2},\frac{7}{16}\right)$ is normalized as 
            \begin{align*}
                \frac{5}{2}\cdot \frac{12}{5}\cdot\frac{4}{3}\cdot\frac{4}{7}\cdot\frac{1}{2}\cdot\frac{7}{16} = \frac{6720}{6720} = 1.
            \end{align*}
        Since $\lcm(16,12,7,5,4,3,2,1) = 1680$, 
            \begin{align*}
                \Psi(\lambda) = 1680 \cdot \lambda = (4200,4032,2240,960,840,735).
            \end{align*}
        One may check that,
            \begin{align*}
                \flip_3(\Psi(\lambda)) = (3840,3360,2940,1260,700,672).
            \end{align*}
    \end{example}
\end{proof}

The main goal of the next two sections will be to construct normalized rational partitions $\lambda$ of length $2j$ that satisfy two properties:
    \begin{itemize}
        \item[(P1)] $|\lambda| = |\flip_j(\lambda)|$ or equivalently $\sum_{i=1}^{2j} \lambda_i = \sum_{i=1}^{2j} \frac{1}{\lambda_i}$.
        \item[(P2)] $\lambda \neq \flip_j(\lambda)$ or equivalently $\leftb \lambda_i:1\leq  i \leq 2j \rightb \neq \leftb \frac{1}{\lambda_i}:1\leq  i \leq 2j \rightb$
    \end{itemize}
Note that the ``equivalently'' statements are \textit{not} true unless our partitions are normalized, further motivating this choice.

%------------------------------------

\section{Constructing partitions satisfying (P1)}\label{sec:graphs}

In this section we will construct normalized rational partitions satisfying (P1). This will be done by taking certain directed graphs we call \textit{embeddings} and using the data of their edges to construct the desired partition. Throughout the rest of this article, any union or intersection symbol shall be interpreted as union or an intersection of multisets unless explicitly stated otherwise.

We recall some definitions on graphs. A \textbf{directed graph} is a tuple $G = (V,E)$ where $V$ is a finite set and $E$ is a finite set of ordered tuples $(u,v)$ with $u,v \in V$ and $u\neq v$. We consider $(u,v)$ as a directed edge $u \to v$. Importantly, our definition does not allow for multiedges or loops. 

For a vertex $v \in V$, the \textbf{in-degree} and \textbf{out-degree} are defined respectively as,
    \[
    \delta^-(v) = |\{ e \in E: e = (x,v) \}|, \quad \delta^+(v) = |\{ e \in E: e = (v,x)\}|.
    \]
A directed graph $G = (V,E)$ is \textbf{Eulerian} if $\delta^-(v) = \delta^+(v)$ for all $v \in V$. Lastly, let $\rev(G)$ be the directed graph obtained from $G$ with all edges reversed.

\begin{definition}\label{def:embedding}
    We say a directed graph $G = (V,E)$ is an \textbf{embedding} if $G$ is Eulerian and $V \subset (-\frac{\pi}{4},\frac{\pi}{4})$. More generally, for a set $\U$ dense in $(-\frac{\pi}{4},\frac{\pi}{4})$, we say $G$ is an $\U$-embedding if $V \subset \U \subseteq (-\frac{\pi}{4},\frac{\pi}{4})$.
    
    To each embedding $G$, we associate a multiset $h(G)$ defined as,
           \[h(G) = \bigcup_{(u,v) \in E(G)}\leftb h^+(u,v),h^-(u,v) \rightb \quad \text{where} \quad h^{\pm}(u,v) = \frac{\cos(2u)\cos(u \pm v)}{\cos(2v)\cos(u \mp v)}.\]
    The reader should consider each edge $(u,v)$ as \textit{contributing} the two entries $h^+(u,v)$ and $h^-(u,v)$ to the multiset $h(G)$. Note that an embedding $G$ with $j$ edges will correspond to a multiset $h(G)$ with $2j$ entries, which is inline with our goal. 
\end{definition}

We say an embedding $G$ satisfies (P1) and (P2) if its corresponding multiset $h(G)$ does. We visualize these embeddings by placing (or embedding) the vertices on their corresponding point on the right quarter of the unit circle (see \cref{ex:embeddings}). Before we prove things about $h(G)$, we collect some helpful trigonometric identities.

\begin{lemma}\label{lemma:trig}
    Let $u,v \in \RR$. Then,
    \begin{enumerate}
        \item[(a)] $\cos(u+v) = \frac{1-\tan(u)\tan(v)}{\sec(u)\sec(v)}$,
        \item[(b)] $\cos(u+v)\cos(u-v) = \frac{1}{2}(\cos(2u) + \cos(2v))$,
        \item[(c)] $\cos(u+v)^2 + \cos(u-v)^2 = 1 + \cos(2u)\cos(2v)$.
    \end{enumerate}
\end{lemma}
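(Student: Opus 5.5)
The three identities are elementary, so the plan is to derive each one directly from the angle-addition formulas $\cos(u\pm v)=\cos u\cos v\mp\sin u\sin v$.

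For (a), factor $\cos u\cos v$ out of $\cos(u+v)=\cos u\cos v-\sin u\sin v$ to get
\[
\cos(u+v)=\cos u\cos v\,\bigl(1-\tan u\tan v\bigr).
\]
Since $\cos u\cos v = \frac{1}{\sec u\sec v}$, this is exactly (a). The identity should be read wherever $\tan$ and $\sec$ are defined, i.e.\ $\cos u,\cos v\neq 0$. This holds for all vertices of an embedding, because they lie in $(-\frac{\pi}{4},\frac{\pi}{4})$.

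For (b), use the product-to-sum formula $\cos A\cos B=\frac12(\cos(A+B)+\cos(A-B))$ with $A=u+v$ and $B=u-v$. Then $A+B=2u$ and $A-B=2v$, which gives (b) immediately. Alternatively, multiply the two addition formulas to get $\cos^2u\cos^2v-\sin^2u\sin^2v$. Substituting $\cos^2=\frac{1+\cos 2\cdot}{2}$ and $\sin^2=\frac{1-\cos2\cdot}{2}$, the $\cos2u\cos2v$ terms cancel and $\frac12(\cos2u+\cos2v)$ remains.

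For (c), write $\cos^2\theta=\frac{1+\cos2\theta}{2}$ for $\theta=u\pm v$. The left side becomes
\[
1+\tfrac12\bigl(\cos(2u+2v)+\cos(2u-2v)\bigr).
\]
Applying the sum-to-product formula (equivalently, (b) with $u,v$ replaced by $2u,2v$ and read backwards) turns the bracket into $2\cos2u\cos2v$. This yields $1+\cos(2u)\cos(2v)$.

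No step is a real obstacle. The only subtlety is the domain caveat in (a): the identity is only meaningful where $\sec u$ and $\sec v$ are finite, and that is automatic on $(-\frac{\pi}{4},\frac{\pi}{4})$.
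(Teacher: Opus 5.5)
Your proof is correct and follows the paper's argument essentially verbatim. For (a) you factor $\cos u\cos v$ out of the addition formula, for (b) you use product-to-sum with $A=u+v$, $B=u-v$, and for (c) you use power reduction followed by sum-to-product. One small slip: the parenthetical in (c) should be (b) with $u\mapsto u+v$, $v\mapsto u-v$ (as the paper does), since replacing $u,v$ by $2u,2v$ in (b) gives $\cos(2u+2v)\cos(2u-2v)=\tfrac12(\cos 4u+\cos 4v)$ instead of the needed identity; your main appeal to sum-to-product is fine as written.
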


\begin{proof}
    To prove (a), observe that,
        \begin{align*}
            \cos(u+v) &= \cos(u)\cos(v) - \sin(u)\sin(v) \\
            &= \cos(u)\cos(v)(1-\tan(u)\tan(v)) \\
            &= \frac{1-\tan(u)\tan(v)}{\sec(u)\sec(v)},
        \end{align*}
    where the first equality is the cosine addition identity \cite[pp.~125]{gelfand_trigonometry}.
    
    (b) follows directly from the product-to-sum identity for cosines \cite[pp.~136]{gelfand_trigonometry}. 
    
    Lastly, for (c) we may write,
        \begin{align}\label{eq:cos}
            \cos(u+v)^2 + \cos(u-v)^2 &= \frac{1 + \cos(2u + 2v)}{2} + \frac{1 + \cos(2u - 2v)}{2} \nonumber\\
            &= 1 + \frac{1}{2}\left(\cos(2u+2v) + \cos(2u - 2v)\right),
        \end{align}
    where the first equality is the power reduction identity \cite[pp.~145]{gelfand_trigonometry}. From (b), we have $\cos(2x) + \cos(2y) = 2\cos\left(x+y\right)\cos\left(x-y\right)$. Applying this to \eqref{eq:cos} with $x = u+v$ and $y = u - v$ so that $x+y= 2u$ and $x-y = 2v$ yields,
        \begin{align*}
            1 + \frac{1}{2}\left(2 \cos(2u)\cos(2v)\right) = 1 + \cos(2u)\cos(2v),
        \end{align*}
    as desired.
\end{proof}

We say an embedding $G = (V,E)$ is \textbf{rational} if every $v \in V$ is of the form $\arctan(p)$ for $p \in \QQ$. Note that in order for $\arctan(p) \in (-\frac{\pi}{4},\frac{\pi}{4})$, it suffices to assume $p \in (-1,1)$. Hence, we have the following alternative description.

\begin{lemma}
    The set $\mathcal{Q} = \arctan(\QQ \cap (-1,1))$ is dense in $(-\frac{\pi}{4},\frac{\pi}{4})$. Hence, an embedding $G$ is rational if and only if $G$ is a $\mathcal{Q}$-embedding.
\end{lemma}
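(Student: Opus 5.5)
We need to show two things: that $\QQ\cap(-1,1)$ maps to a dense subset of $(-\frac{\pi}{4},\frac{\pi}{4})$ under $\arctan$, and that rationality of an embedding is the same as being a $\Q$-embedding.

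For density, we use that $\arctan:\RR\to(-\frac{\pi}{2},\frac{\pi}{2})$ is a continuous, strictly increasing bijection. It restricts to a homeomorphism from $(-1,1)$ onto $(-\frac{\pi}{4},\frac{\pi}{4})$, because $\arctan(\pm 1)=\pm\frac{\pi}{4}$. Take any open interval $(a,b)\subseteq(-\frac{\pi}{4},\frac{\pi}{4})$ with $a<b$. Its preimage is $(\tan a,\tan b)\subseteq(-1,1)$, which is a nonempty open interval since $\tan$ is increasing on $(-\frac{\pi}{4},\frac{\pi}{4})$. Since $\QQ$ is dense in $\RR$, this preimage contains some rational $p$. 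Then $\arctan(p)\in(a,b)\cap\Q$, so $\Q$ is dense.

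For the equivalence, first suppose $G$ is rational. Each vertex $v$ then has the form $v=\arctan(p)$ with $p\in\QQ$. Since $G$ is an embedding, $v\in(-\frac{\pi}{4},\frac{\pi}{4})$, and monotonicity of $\arctan$ forces $p=\tan v\in(-1,1)$. Hence $v\in\Q$, and $V\subset\Q$; combined with the density just shown, $G$ is a $\Q$-embedding. Conversely, if $V\subset\Q$, then every vertex is $\arctan(p)$ for some $p\in\QQ\cap(-1,1)\subseteq\QQ$, so $G$ is rational.

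No step here is a real obstacle. The only point needing care is the endpoint computation $\arctan(\pm1)=\pm\frac{\pi}{4}$ together with strict monotonicity. These ensure that the restriction to $(-1,1)$ lands exactly on $(-\frac{\pi}{4},\frac{\pi}{4})$, and that this restriction is what a rational embedding automatically satisfies.
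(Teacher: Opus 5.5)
Your proof is correct and takes essentially the same route as the paper: you transport the density of $\QQ\cap(-1,1)$ in $(-1,1)$ through $\arctan$, and you make explicit (via monotonicity and $\arctan(\pm 1)=\pm\frac{\pi}{4}$) the surjectivity onto $(-\frac{\pi}{4},\frac{\pi}{4})$ that the paper's ``continuous image of a dense set is dense'' tacitly uses. Your handling of the equivalence is a slightly more precise version of the paper's one-line ``the second claim follows from the first'': you get $p=\tan v\in(-1,1)$ from the vertex constraint, and you use density only to make $\Q$ admissible as a $\U$.
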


\begin{proof}
    As $\QQ$ is dense in $\RR$, $\QQ \cap (-1,1)$ is dense in $(-1,1)$. Since $\arctan$ is continuous, the image of the dense set $\QQ \cap (-1,1)$ must also be dense. The second claim follows from the first.
\end{proof}

For vertices $u,v \in \Q$, $h^\pm(u,v)$ outputs a rational number, justifying the name. This is detailed in the following proposition. 

\begin{prop}\label{prop:rational}
    For $u,v \in \Q$, let $p,q \in \QQ$ be such that $u = \arctan(p)$ and $v = \arctan(q)$. Then,
        \begin{align*}
            h^\pm(u,v) = \frac{1 - p^2}{1 + p^2}\cdot\frac{1 + q^2}{1 - q^2}\cdot
            \frac{1\mp pq}{1\pm pq}.
        \end{align*}
    In particular, $h^\pm(u,v) \in \QQ$.
\end{prop}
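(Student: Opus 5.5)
The plan is to express each cosine factor of $h^\pm(u,v)$ as a rational function of $p=\tan u$ and $q=\tan v$, then substitute. Take the double-angle factors first. We have $\cos(2u) = \cos^2 u - \sin^2 u = \cos^2(u)(1-\tan^2 u)$. Since $\cos^2 u = 1/\sec^2 u = 1/(1+\tan^2 u)$, this gives
\begin{align*}
\cos(2u) = \frac{1-p^2}{1+p^2}, \qquad \cos(2v) = \frac{1-q^2}{1+q^2}.
\end{align*}
Because $u,v\in(-\frac{\pi}{4},\frac{\pi}{4})$, we have $p,q\in(-1,1)$. Hence $1-q^2\neq 0$, and $\cos(2v)\neq 0$, so the quotient $\cos(2u)/\cos(2v)$ is well-defined and equals $\frac{1-p^2}{1+p^2}\cdot\frac{1+q^2}{1-q^2}$.

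Next, handle the ratio $\cos(u\pm v)/\cos(u\mp v)$ with \cref{lemma:trig}(a). That identity gives
\begin{align*}
\cos(u+v)=\frac{1-pq}{\sec(u)\sec(v)}, \qquad \cos(u-v)=\frac{1+pq}{\sec(u)\sec(v)},
\end{align*}
where the second formula comes from applying (a) with $v$ replaced by $-v$, using that $\sec$ is even and $\tan$ is odd. In the ratio the common factor $\sec(u)\sec(v)$ cancels, leaving
\begin{align*}
\frac{\cos(u\pm v)}{\cos(u\mp v)} = \frac{1\mp pq}{1\pm pq}.
\end{align*}
This step needs $1\pm pq\neq 0$, which holds because $|pq|<1$.

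Multiplying the two pieces gives the stated formula for $h^\pm(u,v)$. Rationality then follows at once: every factor is a quotient of polynomials in $p,q\in\QQ$ with nonzero denominators.

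There is no real obstacle in this argument. The only points needing care are the sign bookkeeping, namely the $\pm$ versus $\mp$ in the final ratio, and noting that $|p|,|q|<1$ keeps every denominator nonzero.
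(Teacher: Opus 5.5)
Your proof is correct and follows essentially the same route as the paper, which applies \cref{lemma:trig}(a) together with the evenness of secant to every cosine factor. Your computation of $\cos(2u)$ via $\cos^2 u-\sin^2 u$ is the same as the paper's use of (a) with $v=u$, giving $\cos(2u)=\frac{1-\tan^2 u}{\sec^2 u}$. Your explicit check that $|p|,|q|<1$ keeps every denominator nonzero is a small point the paper leaves implicit.
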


\begin{proof}
    By \cref{lemma:trig} (a) and the fact that secant is an even function,
    \begin{align*}
        h^\pm(u,v) &= \frac{1-\tan(u)^2}{1 + \tan(u)^2}\cdot\frac{1 + \tan(v)^2}{1  - \tan(v)^2}\cdot \frac{1\mp\tan(u)\tan(v)}{1\pm\tan(u)\tan(v)}\\
        &= \frac{1\mp p^2}{1\pm p^2}\cdot\frac{1\pm q^2}{1\mp q^2}\cdot
            \frac{1\mp pq}{1\pm pq},
    \end{align*}
    as desired.
\end{proof}

The reader may also take \cref{prop:rational} as a trigonometry free definition of $h(G)$ by letting $p,q \in (-1,1)$. This form is particularly useful for by-hand calculations (see \cref{ex:embeddings}), though the form appearing in \cref{def:embedding} will be more useful for proofs.

We collect some nice properties of the multiset $h(G)$.

\begin{lemma}\label{lemma:nice properties}
    For an embedding $G = (V,E)$ with $j$ edges, 
        \begin{itemize}
            \item[(a)] $h(G)$ is normalized; equivalently $\prod_{x \in h(G)}x = 1$.
            \item[(b)] $h(G) \in \overline{\P_n}(\QQ)$ if $G$ is rational.
            \item[(c)] $\flip_j(h(G)) = h(\rev(G))$
        \end{itemize}
\end{lemma}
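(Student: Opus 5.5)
The three parts will be handled edge by edge, working directly from the trigonometric form of $h^\pm$ in \cref{def:embedding}.

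\textbf{Part (a).} For a single edge $(u,v)$ the contribution is
\[
h^+(u,v)\,h^-(u,v)=\frac{\cos(2u)^2\cos(u+v)\cos(u-v)}{\cos(2v)^2\cos(u-v)\cos(u+v)}=\frac{\cos(2u)^2}{\cos(2v)^2}.
\]
The cosines of $u\pm v$ cancel, and none of them vanish: since $u,v\in(-\frac{\pi}{4},\frac{\pi}{4})$, both $|u\pm v|<\frac{\pi}{2}$ and $|2u|,|2v|<\frac{\pi}{2}$. Hence
\[
\prod_{x\in h(G)}x=\prod_{w\in V}\cos(2w)^{2(\delta^+(w)-\delta^-(w))},
\]
because each edge contributes a factor $\cos(2w)^2$ for its tail and $\cos(2w)^{-2}$ for its head. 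The Eulerian condition $\delta^+(w)=\delta^-(w)$ makes every exponent zero, so the product equals $1$.

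\textbf{Part (b).} By \cref{prop:rational}, every entry of $h(G)$ is rational. Each entry is also positive, since all the cosines above are positive on the given ranges. There are $2j$ entries, so after sorting, $h(G)$ is a positive rational partition of length $2j$. By (a) it is normalized, so $h(G)\in\overline{\P_{2j}}(\QQ)$. The statement writes $\overline{\P_n}$, presumably meaning $n=2j$.

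\textbf{Part (c).} Because $h(G)$ is normalized, the definition of $\flip_j$ gives $\flip_j(h(G))=\leftb 1/x : x\in h(G)\rightb$ as multisets. It therefore suffices to check, for each edge, that
\[
\leftb 1/h^+(u,v),\,1/h^-(u,v)\rightb=\leftb h^+(v,u),\,h^-(v,u)\rightb.
\]
Swapping $u$ and $v$ in the formula gives
\[
h^{\pm}(v,u)=\frac{\cos(2v)\cos(v\pm u)}{\cos(2u)\cos(v\mp u)}.
\]
Since cosine is even, $\cos(v\pm u)=\cos(u\pm v)$ and $\cos(v\mp u)=\cos(u\mp v)$, so $h^\pm(v,u)=1/h^\pm(u,v)$. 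Summing over the edges, which are exactly the reversed edges of $\rev(G)$, yields $h(\rev(G))$.

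No step here is a real obstacle. The only points needing care are confirming that no cosine vanishes, so the cancellations are legitimate, and invoking normalization so that the $j$-th root factor in $\flip_j$ equals $1$.
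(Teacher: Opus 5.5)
Your proof follows the same route as the paper's in all three parts: the per-edge product $\cos(2u)^2/\cos(2v)^2$ plus the Eulerian condition for (a), \cref{prop:rational} plus (a) for (b), and a reduction of (c) to a per-edge reciprocal identity. Your added remarks on positivity of the entries and on reading $\overline{\P_n}$ as $\overline{\P_{2j}}$ are correct.

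There is one error, in (c). The identity you state, $h^\pm(v,u)=1/h^\pm(u,v)$, is false. After using that cosine is even you correctly obtain
\[
h^\pm(v,u)=\frac{\cos(2v)\cos(u\pm v)}{\cos(2u)\cos(u\mp v)},
\]
but this is the reciprocal of $h^\mp(u,v)$, not of $h^\pm(u,v)$. Indeed, $h^\pm(v,u)\,h^\pm(u,v)=\cos(u\pm v)^2/\cos(u\mp v)^2$, which equals $1$ only when $\sin(2u)\sin(2v)=0$. For example, take the edge $\arctan(3/4)\to\arctan(1/2)$ from \cref{ex:embeddings}. There $h^+(v,u)=75/77=1/h^-(u,v)$, whereas $1/h^+(u,v)=33/7$.

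The correct identity is $h^\pm(v,u)=1/h^\mp(u,v)$, which is the one the paper uses. Your argument only needs the equality of two-element multisets $\leftb 1/h^+(u,v),1/h^-(u,v)\rightb=\leftb h^+(v,u),h^-(v,u)\rightb$, and that still holds with the signs swapped. So this is a fixable slip in an intermediate identity, not a structural gap, but as written that step is wrong.
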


\begin{proof}
    We first show (a). Indeed,
        \begin{align*}
            \prod_{x\in h(G)}x &= \prod_{(u,v) \in E} \left(\frac{\cos(2u)\cos(u + v)}{\cos(2v)\cos(u - v)}\right) \cdot \left(\frac{\cos(2u)\cos(u - v)}{\cos(2v)\cos(u + v)}\right) \\
            &= \prod_{(u,v) \in E} \left(\frac{\cos(2u)}{\cos(2v)}\right)^2 \\
            &= \prod_{v \in V} \cos(2v)^{2\delta^+(v)}\cos(2v)^{-2\delta^-(v)} = 1.
        \end{align*}
    The last equality follows from the assumption that $G$ is Eulerian.
    
    Next, (b) follows from (a) and \cref{prop:rational}.
    
    Lastly, for (c) we will show both sides are equal to $\leftb\frac{1}{x}:x \in h(G)\rightb$. The left hand side follows from (a) and the definition of $\flip_j$. The right hand side follows from the fact that $h^\pm(u,v) = \frac{1}{h^\mp(v,u)}$.
\end{proof}

\begin{example}\label{ex:embeddings}
    \begin{figure}[htp!]
        \centering
            \begin{tikzpicture}[>=stealth]
                \def \r {3}; %radius
                \def \p {2}; %vertex size
                \begin{scope}[xshift=0pt]
                
                % angles in degrees
                \def\a{26.565} % arctan(1/2)
                \def\b{8.13} % arctan(1/7)
                \def\c{36.87} % arctan(3/4)
                % halfcircle drawing
                \draw[color=gray,dashed] (90:\r) arc (90:-90:\r);
                \draw[color=gray,dashed] (0,\r) -- (0,-\r);
                \draw[color=gray,dashed] (0,0) -- (\r,0);
                % part of halfcircle containing those nodes
                \draw[color=black,thick] (45:\r) arc (45:-45:\r);
                \draw[color=black,thick] (0,0) -- (45:\r);
                \draw[color=black,thick] (0,0) -- (-45:\r);
                % drawing of graph
                \fill (\a:\r) circle (\p pt);
                \fill (\b:\r) circle (\p pt);
                \fill (\c:\r) circle (\p pt);
                \draw[->,shorten >=\p pt,thick] (\a:\r) to[bend right=20] (\b:\r);
                \draw[->,shorten >=\p pt,thick,] (\b:\r) to[out=   180, in=230, looseness=1.5] (\c:\r);
                \draw[->,shorten >=\p pt,thick] (\c:\r) to[bend right=35] (\a:\r);
                % vertex labels
                \fill (\a:\r) circle (2pt) 
                node[right] {\tiny$\arctan(1/2)$};
                \fill (\b:\r) circle (2pt) 
                node[right] {\tiny$\arctan(1/7)$};
                \fill (\c:\r) circle (2pt) 
                node[right] {\tiny$\arctan(3/4)$};
                \end{scope}
%------------------------------------
                \def \r {3}; %radius
                \def \p {2}; %vertex size
                \begin{scope}[xshift=170pt]

                \def\a{36.87} % arctan(3/4)
                \def\b{18.435} % arctan(1/3)
                \def\c{-11.31} % arctan(1/5)
                \def\d{-39.8} % arctan(-5/6)
                % halfcircle drawing
                \draw[color=gray,dashed] (90:\r) arc (90:-90:\r);
                \draw[color=gray,dashed] (0,\r) -- (0,-\r);
                \draw[color=gray,dashed] (0,0) -- (\r,0);
                % part of halfcircle containing those nodes
                \draw[color=black,thick] (45:\r) arc (45:-45:\r);
                \draw[color=black,thick] (0,0) -- (45:\r);
                \draw[color=black,thick] (0,0) -- (-45:\r);
                % where the flipped nodes would be
                \fill[color=black] (\a:\r) circle (2pt) 
                node[right] {\tiny$\arctan(3/4)$};
                \fill[color=black] (\b:\r) circle (2pt) 
                node[right] {\tiny$\arctan(1/3)$};
                \fill[color=black] (\c:\r) circle (2pt) 
                node[right] {\tiny$\arctan(-1/5)$};
                \fill[color=black] (\d:\r) circle (2pt) 
                node[right] {\tiny$\arctan(-5/6)$};
                \draw[->,shorten >=\p pt,thick] (\d:\r) to[bend left=25] (\c:\r);
                \draw[->,shorten >=\p pt,thick] (\c:\r) to[bend left=25] (\b:\r);
                \draw[->,shorten >=\p pt,thick] (\b:\r) to[bend left=25] (\a:\r);
                \draw[->,shorten >=\p pt,thick] (\a:\r) to[bend right=35] (\d:\r);
                \end{scope}
    \end{tikzpicture}
        \caption{Examples of $\Q$-embeddings.}
        \label{fig:good embeddings}
    \end{figure}               

Consider the embeddings $G$ and $H$ appearing on the left and right in \cref{fig:good embeddings}. They are both $\Q$-embeddings as their vertices are of the form $\arctan(p)$ for $p\in \QQ$. Using the formula from \cref{prop:rational}, we see that the $h^+$ contribution for the edge $\arctan(3/4) \to \arctan(1/2)$ may be computed as follows. 
    \begin{align*}
        h^+(\arctan(3/4),\arctan(1/2)) &= \frac{1 - \left(\frac{3}{4}\right)^2}{1 + \left(\frac{3}{4}\right)^2} \cdot \frac{1 + \left(\frac{1}{2}\right)^2}{1 - \left(\frac{1}{2}\right)^2} \cdot \frac{1- \left(\frac{3}{4}\cdot\frac{1}{2}\right)}{1+ \left(\frac{3}{4}\cdot\frac{1}{2}\right)} \\
        &= \frac{7}{16}\cdot\frac{16}{25}\cdot\frac{5}{4}\cdot \frac{4}{3}\cdot \frac{5}{8} \cdot \frac{8}{11}\\
        &= \frac{7}{33}.
    \end{align*}

Continuing this way, we see that the multisets associated to $G$ and $H$ are,
\[h(G) = \left\{\!\!\!\left\{\frac{7}{33},\frac{77}{75}, \frac{13}{24},\frac{75}{104}, \frac{600}{217},\frac{744}{175} \right\}\!\!\!\right\},\]
and
\[h(H) = \left\{\!\!\!\left\{\frac{1001}{3660},\frac{715}{5124},\frac{105}{104},\frac{120}{91},\frac{12}{7},\frac{100}{21},\frac{5551}{825},\frac{1281}{3575} \right\}\!\!\!\right\}.\]
\end{example}

We implore the reader to confirm that the multisets appearing in \cref{ex:embeddings} satisfy (P1). It is a nice fact that \textit{every} multiset associated to an embedding will satisfy (P1), as we prove in the following lemma.

\begin{lemma}\label{lemma:P1}
    For any embedding $G = (V,E)$ with $j$ edges, $|h(G)| = |\flip_j(h(G))|$.
\end{lemma}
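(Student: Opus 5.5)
By \cref{lemma:nice properties}(a), $h(G)$ is normalized. By \cref{lemma:nice properties}(c), or directly from the definition of $\flip_j$, we have $\flip_j(h(G))=\leftb 1/x : x\in h(G)\rightb$. So the statement reduces to showing
\[
\sum_{x\in h(G)} x \;=\; \sum_{x\in h(G)} \frac{1}{x},
\]
which is property (P1) in its normalized form. Both sides are sums over edges, and the natural plan is to compute the contribution of a single edge $(u,v)$ to $\sum x - \sum 1/x$. I expect this contribution to be of the form $\phi(u)-\phi(v)$ for some function $\phi$ on vertices; summing over edges then gives $\sum_v \phi(v)(\delta^+(v)-\delta^-(v))=0$ because $G$ is Eulerian. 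This is the same telescoping mechanism as in the proof of \cref{lemma:nice properties}(a), but additive rather than multiplicative.

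For the single-edge computation, write $a=\cos(2u)/\cos(2v)$, $c_+=\cos(u+v)$ and $c_-=\cos(u-v)$. Then
\[
h^+(u,v)+h^-(u,v)=a\left(\frac{c_+}{c_-}+\frac{c_-}{c_+}\right)=a\,\frac{c_+^2+c_-^2}{c_+c_-},
\]
and the reciprocals sum to $a^{-1}(c_+^2+c_-^2)/(c_+c_-)$. By \cref{lemma:trig}(b),(c),
\[
\frac{c_+^2+c_-^2}{c_+c_-}=\frac{2\bigl(1+\cos(2u)\cos(2v)\bigr)}{\cos(2u)+\cos(2v)}.
\]
Set $X=\cos(2u)$ and $Y=\cos(2v)$; these are positive since $u,v\in(-\pi/4,\pi/4)$, so nothing degenerates. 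The contribution of the edge is then
\[
\left(\frac{X}{Y}-\frac{Y}{X}\right)\frac{2(1+XY)}{X+Y}=\frac{2(X-Y)(1+XY)}{XY}=2\left(X-\frac1X\right)-2\left(Y-\frac1Y\right),
\]
where the last step expands $(X-Y)(1+XY)/(XY)=1/Y+X-1/X-Y$.

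This gives $\phi(w)=2\bigl(\cos(2w)-\sec(2w)\bigr)$. Summing over $E$ and using the Eulerian condition gives zero, which completes the argument.

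The main obstacle is the algebraic simplification that exposes the factor $X-Y$ and makes the contribution split as a difference of a vertex function. Once the trigonometric identities of \cref{lemma:trig} are applied, this becomes the routine identity above, so I expect no real difficulty.
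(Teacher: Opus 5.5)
Your proposal is correct and is essentially the paper's proof. You reduce to $\sum_{x\in h(G)}x=\sum_{x\in h(G)}1/x$ by normalization, rewrite each edge's contribution via \cref{lemma:trig}(b),(c) as $\bigl(\frac{X}{Y}-\frac{Y}{X}\bigr)\frac{2(1+XY)}{X+Y}=2\bigl(X-\frac{1}{X}\bigr)-2\bigl(Y-\frac{1}{Y}\bigr)$, and telescope over edges using the Eulerian condition, exactly as the paper does (your bookkeeping of the edge contribution as $\phi(u)-\phi(v)$ is the consistent one; the paper's intermediate line $\term(u,v)=2(F(v)-F(u))$ has the sign reversed, which is harmless).
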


\begin{proof}
    By \cref{lemma:nice properties} (a), the equality $|h(G)| = |\flip_j(h(G))|$ is equivalent to $\sum_{x \in h(G)}x = \sum_{x\in h(G)}\frac{1}{x}$, which we now show. First we write,
        \begin{align*}
            \sum_{x \in h(G)} \left(x - \frac{1}{x}\right) = \sum_{(u,v) \in E}\term(u,v),
        \end{align*}
    where,
        \begin{align*}
            \term(u,v)
            &= \frac{\cos(2u)\cos(u + v)}{\cos(2v)\cos(u - v)} - \frac{\cos(2v)\cos(u - v)}{\cos(2u)\cos(u + v)}\\
            &+ \frac{\cos(2u)\cos(u - v)}{\cos(2v)\cos(u + v)} - \frac{\cos(2v)\cos(u + v)}{\cos(2u)\cos(u - v)}.
        \end{align*}
    Factoring this, we have,
        \begin{align*}
            \term(u,v) = \left(\frac{\cos(2u)}{\cos(2v)} - \frac{\cos(2v)}{\cos(2u)}\right) \cdot \left(\frac{\cos(u+v)}{\cos(u-v)} + \frac{\cos(u-v)}{\cos(v+u)}\right).
        \end{align*}
    Let $A = \cos(2u)$ and $B = \cos(2v)$. With this notation,
        \begin{align}
            \frac{\cos(2u)}{\cos(2v)} - \frac{\cos(2v)}{\cos(2u)} = \frac{A}{B} - \frac{B}{A} = \frac{A^2 - B^2}{AB}.
        \end{align}
    Further, simplifying and applying \cref{lemma:trig} (a) and (b),
        \begin{align*}
            \frac{\cos(u+v)}{\cos(u-v)} + \frac{\cos(u-v)}{\cos(v+u)} = \frac{\cos(u+v)^2 + \cos(u-v)^2 }{\cos(u+v)\cos(u-v)} = 2\cdot\frac{1 + AB}{A+B}.
        \end{align*}
    Therefore, we have the follow convenient form, 
        \begin{align*}
            \term(u,v) = 2\left(\frac{A^2 - B^2}{AB}\right)\left(\frac{1 + AB}{A+B}\right) &= 2\cdot\frac{(A-B)(1+AB)}{AB} \\
            &= 2 \left(A - \frac{1}{A}\right) - 2\left(B-\frac{1}{B}\right).
        \end{align*}
    Letting $F(v) = \cos(2v) - \frac{1}{\cos(2v)}$, $\term(u,v) = 2(F(v) - F(u))$. Reindexing the sum, we have,
        \begin{align*}
            \sum_{(u,v) \in E} \term(u,v) &= 2\cdot\sum_{(u,v) \in E} \left(F(u) - F(v)\right) \\
            &= 2\cdot \sum_{v\in V}(\delta^+(v)F(v) - \delta^-(v)F(v)) = 0.
        \end{align*} The last equality again follows from the assumption that $G$ is Eulerian. This proves the claim.
\end{proof}

%------------------------------------

\section{Finding partitions satisfying (P2)}

In this section, we prove that there are infinitely many normalized rational partitions satisfying (P2) by finding a subset of $\Q$-embeddings that do. We then translate this back to integer partitions and prove \cref{thm:main}. Unlike (P1), infinitely many embeddings fail to satisfy (P2). Indeed, one may convince themselves that any embedding $G = (V,E)$ satisfying the property that $(u,v) \in E$ if and only if $(-v,-u)\in E$ fails (P2). And these are not the only ones; see \cref{ex:bad examples} for another embedding failing (P2) not of this form.

\begin{example} \label{ex:bad examples}
    Consider the embedding $G$ appearing in \cref{fig:bad embedding}. Its associated multiset is,
    \[h(G) = \left\{\!\!\!\left\{\frac{793}{5610},\frac{4758}{935}, \frac{61}{11},\frac{11}{61}, \frac{5610}{793},\frac{935}{4758} \right\}\!\!\!\right\}.\]
    Then $G$ does not satisfy (P2) since $h(G)$ is closed under taking reciprocals of its entries.
    \begin{figure}[htp]
        \centering
            \begin{tikzpicture}[>=stealth]
                \def \r {3}; %radius
                \def \p {2}; %vertex size
                \begin{scope}[xshift = 100pt]
                    %halfcircle drawing
                    \draw[color=gray,dashed] (90:\r) arc (90:-90:\r);
                    \draw[color=gray,dashed] (0,\r) -- (0,-\r);
                    \draw[color=gray,dashed] (0,0) -- (\r,0);
                    %quarter circle drawing
                    \draw[color=black,thick] (45:\r) arc (45:-45:\r);  
                    \draw[color=black,thick] (0,0) -- (45:\r);
                    \draw[color=black,thick] (0,0) -- (-45:\r);
                    %Drawing of graph
                    \foreach \a in {0.6047, -0.6047, -0.7286} {\fill ({\a*180/pi}:\r) circle (\p pt);}
                    \draw[->,shorten >=\p pt,thick] (0.6047*180/pi:\r) to [bend right = 40] (-0.7286*180/pi:\r);
                    \draw[->,shorten >=\p pt,thick] ({-0.7286*180/pi}:\r) to [bend left=50]({-0.6047*180/pi}:\r);
                    \draw[->,shorten >=\p pt,thick] (-0.6047*180/pi:\r) to [bend left = 10] (0.6047*180/pi:\r);
                    %vertex labels
                    \fill (0.6047*180/pi:\r) circle (2pt) node[right] {\tiny$\arctan(5/6)$};
                    \fill (-0.6047*180/pi:\r) circle (2pt) node[right] {\tiny$-\arctan(5/6)$};
                    \fill ({-0.7286*180/pi}:\r) circle (2pt) node[right] {\tiny$-\arctan(6/7)$};
                \end{scope}
            \end{tikzpicture}
        \caption{An example of a $\Q$-embedding not satisfying (P2).}
        \label{fig:bad embedding}
    \end{figure}
\end{example}

In this article, we do not explicitly classify which embeddings satisfy (P2). Instead, we will show that if $G$ is an embedding that fails to satisfy (P2), we may ``wiggle'' the vertices of $G$ to obtain a new embedding $G'$ that does. In fact we will be able to show this for $\U$-embeddings for arbitrary dense sets $\U$. In particular, we may set $\U = \mathcal{Q}$ which lets us find rational $h(G)$ by \cref{prop:rational}. To formalize what we mean, we must first define a notion of \textit{distance} between embeddings.

\begin{definition}
    For two embeddings $G = (V,E)$, $G' = (V',E')$, define $\sigma_{G,G'}:V \to V'$ as the unique order preserving bijection. For example, if $G$ and $G'$ have vertex sets $V = (-\frac{1}{2},0,\frac{1}{3})$ and $V' = (-\frac{2}{9},\frac{1}{3},\frac{1}{2})$, then $\sigma_{G,G'}(-\frac{1}{2}) = -\frac{2}{9}$, $\sigma_{G,G'}(0) = \frac{1}{3}$, and $\sigma_{G,G'}(\frac{1}{3}) = \frac{1}{2}$.
\end{definition}

\begin{definition}\label{def:neighbors}
    We say two embeddings $G = (V,E)$ and $G' = (V',E')$ are \textbf{neighborly} if $(u,v) \in E$ if and only if $(\sigma_{G,G'}(u),\sigma_{G,G'}(v)) \in E'$ for all $u,v \in V$. In other words, $\sigma_{G,G'}$ is an isomorphism of directed graphs. Hence, we denote this by $G \cong_N G'$. For two neighborly embeddings $G = (V,E)$, $G' = (V',E')$, define the \textbf{distance} between them as,
        \begin{align*}
            \dist(G,G') = \sum_{v \in V} |v - \sigma_{G,G'}(v)|.
        \end{align*}
    The reader may think of this as the total displacement needed to wiggle the vertices of $G$ in order to obtain $G'$.
\end{definition}

Any graph isomorphism would work in place of $\sigma_{G,G'}$ in \cref{def:neighbors}. We simply need to choose a canonical one and $\sigma_{G,G'}$ is a natural candidate.

\begin{definition}\label{def:neighborhood}
    For any $\U$-embedding $G = (V,E)$ and $ \epsilon > 0$, define 
    \[\U_\epsilon(G) = \{H: G \cong_N H, \;\text{$H$ is a $\U$-embedding}, \; \;\dist(G,H) < \epsilon\}.\] 
    Further, for a subset of vertices $A\subseteq V$, let $\U_{\epsilon}(G;A)$ be the subset of $\U_\epsilon(G)$ containing all $H$ that differ from $G$ in only $A$.
\end{definition}

Since $\U$ is dense in $(-\frac{\pi}{4},\frac{\pi}{4})$, $\U_\epsilon(G)$ always has infinitely many elements for any choice of $\epsilon$. The next lemma will be key in the following arguments. 

\begin{lemma}\label{lemma:exclusion}
    For any $\U$-embedding $G = (V, E)$, non-empty $A \subseteq V$, and $\epsilon > 0$,
    \begin{align}\label{eq:exclusion}
        \bigcap_{H\in \U_{\epsilon}(G; A)} h(H) = h(G\backslash A).
    \end{align}
\end{lemma}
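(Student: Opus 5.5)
We read $G\backslash A$ as the graph on $V\setminus A$ whose edges are those edges of $G$ with neither endpoint in $A$, so $h(G\backslash A)$ collects the contributions of exactly those edges. The plan is to split, for every $H\in\U_\epsilon(G;A)$, the multiset $h(H)$ as a multiset union $h(H)=h(G\backslash A)\cup M(H)$. Here $M(H)$ consists of the contributions $h^\pm(\sigma(u),\sigma(v))$ of the edges $(u,v)\in E$ with $u\in A$ or $v\in A$, and $\sigma=\sigma_{G,H}$.

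\emph{Step 1 (the first summand is constant).} Since $H$ differs from $G$ only in $A$, we have $\sigma(w)=w$ for $w\notin A$. Neighborliness then shows that every edge of $G\backslash A$ contributes the same two values to $h(H)$ as to $h(G)$. This justifies the decomposition and gives the inclusion $\supseteq$ in \eqref{eq:exclusion} at once.

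\emph{Step 2 (reduce to an empty intersection).} Multiset intersection takes minimal multiplicities, and $\min_H(a+m_H)=a+\min_H m_H$. Hence
\[
\bigcap_{H} h(H)=h(G\backslash A)\cup\bigcap_H M(H).
\]
So it suffices to show that for each real $x$ there is some $H\in\U_\epsilon(G;A)$ with $x\notin M(H)$.

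\emph{Step 3 (generic perturbation).} Parametrize candidates $H$ by the positions $t=(t_a)_{a\in A}$ of the vertices of $A$, with all other vertices fixed. Take $t$ in a small open box $B$ around the original positions. Choose $B$ small enough that three things hold: the relative order of all vertices is unchanged (so $\sigma$ is the obvious map and $H\cong_N G$); the total displacement is $<\epsilon$; and everything stays inside $(-\frac{\pi}{4},\frac{\pi}{4})$. The Eulerian condition is automatic, since the edge structure is unchanged. Each contribution in $M(H)$ is a function $g(t)=h^\pm(u',v')$ in which at least one of $u',v'$ is a coordinate of $t$. These functions are real-analytic on $B$, because all cosines involved are positive there.

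\emph{Main obstacle: nonconstancy.} The expected hard point is to check that each such $g$ is nonconstant on $B$. Given this, its level set $\{g=x\}$ is a closed nowhere-dense subset of $B$, and a finite union of such sets cannot contain the dense set $B\cap\U^{A}$ (a product of dense sets is dense). Therefore some $t\in\U^A\cap B$ avoids every equation $g(t)=x$, and the corresponding $H$ satisfies $x\notin M(H)$.

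To prove nonconstancy, restrict $g$ to a line through the point of $B$. Suppose first the edge has only its head $v$ in $A$. Then
\[
g(v)=\frac{\cos(2u)\cos(u\pm v)}{\cos(2v)\cos(u\mp v)}
\]
extends analytically to $v$ near $\pi/4$ and blows up there, because $\cos(2v)\to0$ while $\cos(u\pm v)\not\to0$ for $|u|<\pi/4$. An analytic function on the connected interval $(-\frac{\pi}{4},\frac{\pi}{4})$ that is constant near one point would be constant on the whole interval, so $g$ is nonconstant. If only the tail $u$ moves, the same argument applies with $u\to\pi/4$, where $g\to0$. If both endpoints lie in $A$, move them along a suitable line, for instance $u=v=s$, where $g$ reduces to $\frac{\cos(2s)}{\cos 0}\cdot\frac{\cos(0)}{\cos(2s)}$ or the analogous expression. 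Failing that, fix one endpoint and vary the other as before; this suffices, since a function that is nonconstant along some line is nonconstant.
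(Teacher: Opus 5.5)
Your proof is correct, and it takes a different route from the paper's. The paper proves the easy inclusion $h(G\backslash A)\subseteq\bigcap_H h(H)$ as in your Step~1. It handles the reverse inclusion by induction on $|A|$. In the base case $A=\{v\}$ it supposes some $x$ has excess multiplicity. It deduces that every admissible new position $v'$ of $v$ makes some edge at $v'$ contribute $x$. Hence one of finitely many one-variable functions $y\mapsto h^\pm(u,y)$ or $y\mapsto h^\pm(y,u)$ equals $x$ at infinitely many points, contradicting the identity theorem. The inductive step then nests intersections over $\U_\epsilon(G;v)$ and $\U_\epsilon(K;A')$.

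You instead move all of $A$ at once inside an order-preserving box $B$ and strip off the fixed part with $\min_H(a+m_H)=a+\min_H m_H$. You then observe that the finitely many level sets $\{g=x\}$ of nonconstant real-analytic functions form a closed nowhere-dense subset of $B$, which the dense set $B\cap\U^A$ must escape. The paper's route needs only one-variable analyticity. Yours uses the several-variable fact that a nonconstant analytic function on a connected open set has level sets with empty interior, but in exchange it removes the induction entirely.

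That is a real gain. The paper's inductive step has to control how two successive perturbations compose: total displacement can reach $2\epsilon$, and moving $v$ alone can change the vertex order. Moreover, the containment it records, $\U_\epsilon(G;A)\subseteq S$ with $S=\bigcup_{K\in\U_\epsilon(G;v)}\U_\epsilon(K;A')$, only yields the easy inclusion again. To get the hard one, the nested family must lie inside $\U_\epsilon(G;A)$, for instance by nesting $\epsilon/2$-neighborhoods. Your box $B$ also makes explicit two points the paper leaves implicit. First, the perturbed graphs really are neighborly with $G$. Second, the relevant functions are nonconstant, which you show via their behavior as a vertex tends to $\frac{\pi}{4}$.

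One small correction: drop the diagonal $u=v=s$ in the case where both endpoints lie in $A$. The expression you display is identically $1$; in fact $h^+(s,s)=\cos 2s$ and $h^-(s,s)=1/\cos 2s$. Also, the two endpoints are distinct vertices, so the diagonal does not meet a small box $B$, and using it would require the identity theorem on all of $(-\frac{\pi}{4},\frac{\pi}{4})^2$. Your fallback, fixing one endpoint and moving the other, reduces to the one-endpoint case and is all you need.
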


\begin{proof}
    For convenience, let $I$ and $R$ denote the multisets on the left and right hand sides of \eqref{eq:exclusion}. Note that $I,R \subseteq h(G)$ since $G \in N_\epsilon(G;A)$. Thus it suffices to show that for any $x \in h(G)$, $x$ appears in $I$ and $R$ with the same multiplicity. For notation, we let $m_I(x)$ and $m_R(x)$ denote these multiplicities.
    
    We first prove $m_I(x) \geq m_R(x)$. We say an edge $(u,w)$ \textit{contributes} $x$ if $x \in \leftb h^+(u,w),h^-(u,w)\rightb$. Let $D$ be the set of edges in $E$ contributing $x$. Then $x$ appears with multiplicity $m_R(x)$ in $S = \bigcup_{e\in D}\leftb h^+(e),h^-(e)\rightb$. Since no edge in $D$ is incident to any vertex in $A$, every edge in $D$ appears in every $H \in \U_\epsilon(G;A)$. In other words, for each $H \in \U_\epsilon(G;A)$, $S \subseteq h(H)$ and so the multiplicity of $x$ in $h(H)$ is at least $m_R(x)$. Thus $x$ appears in $I$ with at least multiplicity $m_R(x)$ proving the desired inequality.

    To prove $m_I(x) \leq m_R(x)$, we proceed by induction on $|A|$. The heaviest part of the proof is the basecase where $A = \{v\}$. Let us assume for sake of contradiction that $m_I(R) > m_R(x)$. Define the set $\U_\epsilon(v) = \{v' \in \U:|v-v'| < \epsilon\}$ of vertices $v'$ in $\U$ near $v$. Note that for each choice of $v' \in \U_\epsilon(v)$, there is an associated $H_{v'} \in \U_\epsilon(G)$ obtained by wiggling $v$ to $v'$. Define the set,
    \[
    C = \{u: (u,v) \in E \;\text{ or }\;(v,u) \in E\}
    \]
    of all vertices $u$ in $V$ adjacent to $v$ in $G$. For each $u \in C$, define two sets, 
        \[
        D^\pm_{u} = \{v' \in \U_\epsilon(v): h^\pm(v',u) = x \;\text{ or }\; h^\pm(u,v') = x\},
        \]
    consisting of the choice of wiggled vertex $v'$ such that the edge $e$ connecting $v'$ to $u$ in $H_{v'}$ satisfies $h^\pm(e) =x$. Since we assumed that $m_I(R) > m_R(x)$, we have,
        \begin{align}\label{eq:finite union}
            \U_\epsilon(v) = \bigcup_{u \in C}D_{u}^+\cup D^-_{u}.
        \end{align}
    Here this union is actually a union of sets, not multisets. Indeed, if \eqref{eq:finite union} were not true, then there would exist a vertex $v' \in \U_\epsilon(v)$ such that no edge incident to $v' \in H_{v'}$ contributes $x$. This would imply that $x$ appears with multiplicity $m_R(x)$ in $h(H_{v'})$ and since $H_{v'} \in \U_\epsilon(G)$, $m_I(x) \leq m_R(x)$, contradicting our assumption.

    Now, since the union on the right hand side of \eqref{eq:finite union} is finite while $\U_\epsilon(v)$ is infinite since $\U$ is dense, there exists some $u \in C$ such that $D_u^+$ or $D_u^-$ is infinite. Assume without loss of generality that $D_u^+$ is infinite and that $(u,v) \in G$ (this second assumption is just picking a direction of the original edge). Then defining $f(y) = h^+(u,y)$ as a function of $y$ on the interval $(-\frac{\pi}{4},\frac{\pi}{4})$, we see that $f(y)$ achieves the value $x$ infinitely many times on $\U_\epsilon(v) \subsetneq (-\frac{\pi}{4},\frac{\pi}{4})$. From here, we achieve a contradiction by observing that $f(v)$ is \textit{analytic}\footnote{We do not state the definition of an analytic function here. We refer the reader to \cite{KrantzRealAnalytic}.} on $(-\frac{\pi}{4},\frac{\pi}{4})$ since cosine is. Then by the identity theorem for real analytic functions \cite[Corollary 1.2.6]{KrantzRealAnalytic}, $f(y) = x$ for \textit{all} $y \in (-\frac{\pi}{4},\frac{\pi}{4})$, a clear contradiction.

    Now suppose $|A| > 1$, meaning $A = A'\cup \{v\}$ for $A' \neq \varnothing$ and $v\not\in A'$. Applying the inductive hypothesis and the basecase,
    \begin{align*}
        h(G\backslash A) &=  h((G\backslash A')\backslash v)\\
        &=  \bigcap_{H\in \U_{\epsilon}(G\backslash A'; v)}h(H)\\
        &= \bigcap_{K\in \U_{\epsilon}(G; v)}h(K\backslash A')\\
        &=  \bigcap_{K\in \U_{\epsilon}(G; v)}\left(\bigcap_{P\in \U_{\epsilon}(K; A')}h(P)\right)
    \end{align*}
    The intersection may be rewritten as $\bigcap_{P\in S} h(P)$ where $S = \bigcup_{K \in \U_\epsilon(G;v)}\U_\epsilon(K;A')$. Now, for any $P \in \U_{\epsilon}(G; A)$, let $v' = \sigma_{G, P}(v)$ and let $K$ be obtained from $G$ by replacing $v$ with $v'$. We have that $K \in \U_\epsilon(G;v)$ since $|v-v'| < \epsilon$ and so we may consider $P \in \U_\epsilon(K;A')$. Thus, $P \in S$ and $\U_\epsilon(G;A) \subseteq S$. This implies that $m_I(x) \leq m_R(x)$ for all $x \in h(G)$, as desired.
\end{proof}

\begin{lemma}\label{lemma:density}
    For any $\U$-embedding $G = (V, E)$ with $j$ edges and $\epsilon > 0$, $\U_{\epsilon}(G)$ contains infinitely many $H$ satisfying $h(H) \neq \flip_j(h(H))$.
\end{lemma}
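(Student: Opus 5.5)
The plan is a contradiction argument that combines the analytic ``identity theorem'' idea from the proof of \cref{lemma:exclusion} with a local reduction. By \cref{lemma:nice properties} (a), every $H \in \U_\epsilon(G)$ has $h(H)$ normalized. So $\flip_j(h(H))$ is just the multiset of reciprocals $\leftb 1/x : x \in h(H)\rightb$, and failing (P2) means exactly that $h(H)$ is closed under reciprocals, with multiplicity.

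\emph{Reduction.} Suppose only finitely many $H \in \U_\epsilon(G)$ satisfy (P2). Each such $H \neq G$ has $\dist(G,H) > 0$. Choosing $0 < \epsilon' < \epsilon$ below all these distances, every $H \in \U_{\epsilon'}(G)$ except possibly $G$ itself has $h(H)$ closed under reciprocals. We may assume $G$ has an edge, since $j \geq 1$.

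\emph{Wiggling one vertex.} Fix a vertex $v$ and an edge $(u,v)$ or $(v,u)$ incident to it, say $(u,v) \in E$. For $v' \in \U_{\epsilon'}(v)$ let $H_{v'}$ be $G$ with $v$ moved to $v'$. Then $1/h^+(u,v') = h^-(v',u)$ must lie in $h(H_{v'})$.
\begin{itemize}
\item Each entry of $h(H_{v'})$ is either a constant (from an edge avoiding $v$) or one of the finitely many analytic functions $y \mapsto h^\pm(y,w)$ or $h^\pm(w,y)$ for neighbours $w$ of $v$, evaluated at $y = v'$.
\item By pigeonhole, infinitely many $v'$ in the interval $(v-\epsilon', v+\epsilon')$ give the same coincidence. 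These $v'$ accumulate inside $(-\frac{\pi}{4},\frac{\pi}{4})$, so by the identity theorem for real analytic functions (as in \cref{lemma:exclusion}) the identity holds for all $y \in (-\frac{\pi}{4},\frac{\pi}{4})$.
\item The constant option is impossible. Indeed, $h^-(y,u) = \cos(2y)\cos(y+u)/(\cos(2u)\cos(y-u))$ tends to $0$ as $y \to \pm\frac{\pi}{4}$ but is not identically $0$.
\item So $h^-(y,u)$ must coincide identically with some $h^\pm(y,w)$ or $h^\pm(w,y)$. Using the rational form of \cref{prop:rational} with $t = \tan y$, each such function is a rational function in $t$ with explicit zeros and poles, for instance at $t = \pm 1$, $t = \pm 1/p$, $t = \pm 1/q$. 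Comparing these forces either $w = u$ with the same pattern, which is the trivial self-match already accounted for, or the reverse edge $(v,u)$ with the matching sign, or some other rigid coincidence among the finitely many tangent values.
\end{itemize}

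\emph{Main obstacle: multiplicities.} The difficulty is that these coincidences are not impossible outright. For example, if both $(u,v)$ and $(v,u)$ are edges, then $h^-(v',u)$ genuinely lies in $h(H_{v'})$ for every $v'$. So a single-vertex argument must count multiplicities instead of mere membership, and in degenerate configurations one vertex may not suffice. My first attempt would be to compare, for the wiggled vertex, the multiset of analytic functions $\{h^\pm(\cdot,w)\}$ contributed by its out-edges and in-edges with its multiset of reciprocals. The outgoing contributions vanish at $y = \pm\frac{\pi}{4}$ and the incoming ones blow up there. Reciprocal-closure for all $y$ then forces a perfect matching between outgoing and incoming edge-functions. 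Matching the finite poles (from $\cos(y \pm w) = 0$ at $t = \mp 1/\tan w$) should force the neighbours to pair up as $w \leftrightarrow w$ or $w \leftrightarrow -w$.

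If some matching survives, I would then wiggle a neighbour $w$ as well, taking $A = \{v,w\}$ as in the inductive step of \cref{lemma:exclusion}, to break the symmetry $w \leftrightarrow -w$; this needs only finitely many further generic choices, so it stays within $\U_{\epsilon'}(G)$. The resulting contradiction gives infinitely many $H$ satisfying (P2). Verifying that the pole/zero bookkeeping really rules out every perfect matching for a generic choice of vertices is the step I expect to be hardest.
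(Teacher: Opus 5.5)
\textbf{The gap.} The proof stops at exactly the step you flag at the end, and that step is not routine bookkeeping: wiggling a single vertex cannot produce a contradiction in general.

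Your pole analysis is correct as far as it goes. A product-to-sum computation shows that for signs $s,s'\in\{\pm 1\}$ one has $h^{s}(y,w)\,h^{s'}(x,y)\equiv 1$ on $(-\frac{\pi}{4},\frac{\pi}{4})$ if and only if $x=-ss'w$. The trouble is that the pairing $w\leftrightarrow -w$ really occurs. Take the 4-cycle $v\to w\to -v\to -w\to v$ with $v,w,-v,-w$ distinct. If you move $v$ to $y$, then $h^\pm(-w,y)=1/h^\pm(y,w)$ and $h^\pm(w,-v)\,h^\pm(-v,-w)=1$. So $h(H)$ stays closed under reciprocals for every $y$. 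By the symmetry of the cycle, the same holds whichever single vertex you move. Your fallback (wiggle a neighbour too, then check that the pole/zero bookkeeping rules out every matching) is only announced, not carried out.

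The antiparallel case you raise is worse than a nuisance: it is a counterexample. If $G$ is the 2-cycle $u\rightleftarrows v$, or any union of such pairs, then $h^\pm(x,y)=1/h^\mp(y,x)$ makes $h(H)$ reciprocal-closed for every $H\in\U_\epsilon(G)$. You must therefore adopt the paper's implicit convention that there is at most one edge between any two vertices. The paper uses this when it says $(u,v)$ is the only edge incident to both $u$ and $v$, and it is why there are no embeddings with two edges.

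\textbf{The missing idea.} Isolate one \emph{edge} rather than one vertex. Assume every $H\in\U_\epsilon(G)$ fails (P2), i.e.\ $h(H)=h(\rev(H))$. The paper fixes an edge $(u,v)$ and applies \cref{lemma:exclusion} with $A=V\setminus\{u,v\}$. Wiggling all the \emph{other} vertices washes out every contribution except that of $(u,v)$. Since wiggling commutes with reversal, this yields $\leftb h^+(u,v),h^-(u,v)\rightb=\leftb h^+(v,u),h^-(v,u)\rightb$. Comparing products gives $\cos 2u=\cos 2v$, hence $u=-v$ for every edge. Then every non-isolated vertex has exactly one incident edge, which contradicts the Eulerian condition. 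Infinitude follows by shrinking $\epsilon$. Your reduction, which allows for $G$ itself being good, is more careful than the paper's minimum-distance step.

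\textbf{How to close your own route.} The identity above lets you finish cheaply without the exclusion lemma.
\begin{enumerate}
\item Fix an edge $(v,w)$ and first move $w$ to some $w_1\in\U$ near $w$ with $-w_1\notin V$.
\item Only then wiggle $v$ to $y$. All the resulting $H_y$ lie in $\U_{\epsilon'}(G)\setminus\{G\}$, so they are reciprocal-closed.
\item $1/h^+(y,w_1)$ lies in $h(H_y)$ for infinitely many $y$. By pigeonhole and the identity theorem, $1/h^+(y,w_1)$ coincides identically with one fixed entry function.
\item Your endpoint behaviour at $\pm\frac{\pi}{4}$ excludes constants and out-edge functions. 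So the entry must come from an in-neighbour $x$ of $v$ with $x\in\{w_1,-w_1\}$.
\item $x=w_1$ would be an antiparallel edge, and $x=-w_1$ is not a vertex, so this is impossible.
\end{enumerate}
This uses only membership; no multiplicity counting is needed.
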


\begin{proof}
    We first prove that $\U_{\epsilon}(G)$ contained at least one such $H$. For contradiction, suppose that every $H\in \U_{\epsilon}(G)$ satisfies $h(H) = \flip_j(h(H))$ and hence, $h(H) = h(\rev(H))$ by \cref{lemma:nice properties} (c). Let $(u,v) \in E$ be an arbitrary edge. Abusing notation, let $(u,v)$ also denote the graph consisting of the single edge $(u,v)$. Applying \cref{lemma:exclusion} with $A = V\setminus\{u,v\}$, we have,
        \begin{align}\label{eq:h of an edge}
            h((u,v)) = h(G/A) = \bigcap_{H \in \U_\epsilon(G;A)}h(H) = \bigcap_{H \in \U_\epsilon(G;A)}h(\rev(H)).
        \end{align}
    The last equality follows from the fact that $\U_\epsilon(G;A) \subset \U_\epsilon(G)$. Since $H$ and $\rev(H)$ are on the same vertex set, \eqref{eq:h of an edge} equals,
        \begin{align*}
            \bigcap_{H \in \U_\epsilon(\rev(G);A)}h(H) = h(\rev(G)/A) = h(\rev((u,v))) = h((v,u)).
        \end{align*}
    Importantly, this allows us to conclude $\leftb h^+(u,v),h^-(u,v)\rightb = \leftb h^+(v,u),h^-(v,u)\rightb$. We claim that this implies $u = \pm v$. First assume $h^+(u,v) = h^+(v,u)$. Then,
        \begin{align*}
            \frac{\cos(2u)\cos(u+v)}{\cos(2v)\cos(u-v)} = \frac{\cos(2v)\cos(u+v)}{\cos(2u)\cos(u-v)} \Longrightarrow \cos(2u)^2 = \cos(2v)^2.
        \end{align*}
    Since $u,v \in (-\frac{\pi}{4},\frac{\pi}{4})$, $\cos(2u),\cos(2v)\geq 0$. Thus, $\cos(2u) = \cos(2v)$ which implies $u = \pm v$.\\
    
    Now assume $h^+(u,v) = h^-(v,u)$. Then we must also have $h^-(u,v) = h^+(v,u)$. Since $h^+(x,y)h^{-}(y,x) = 1$ and $h^{\pm}(x,y) > 0$ for any $x,y \in (-\frac{\pi}{4},\frac{\pi}{4})$ then $1 = h^+(u,v)=h^{-}(u,v)$. This means the first case applies and so $u=\pm v$. By definition, $G$ has no loops or multiedges and so $(u,v)$ is the only edge incident to both $u$ and $v$. But this contradicts that $G$ is Eulerian.

    To prove that $\U_\epsilon(G)$ contains infinitely many $H$ satisfying $h(H) \neq \flip_j(h(H))$, call such $H$ good and assume there are finitely many good $H$'s. Let $\delta$ denote $\min\{\dist(G,H):H \text{ is good}\}$. Then pick $\delta>\epsilon'>0$, and apply the above result to see that there exists a good $H' \in \U_{\epsilon'}(G)$, contradicting minimality of $\delta$.
\end{proof}

Let $\U^{\text{good}}_\epsilon(G)$ denote the subset of $\U_\epsilon(G)$ consisting of $\U$-embeddings $H$ satsifying (P2). Applying \cref{lemma:density} allows us to conclude that this set is infinite. The last thing we must confirm before proving \cref{thm:main} is that the set of multisets we get from these good embeddings (i.e. $\{h(H):H \in \U^{\text{good}}_\epsilon(G)\}$) is also infinite. We collect one useful lemma before proving this.

\begin{lemma}\label{lemma:finite}
    For fixed $A,B > 0$, there are only finitely many $u,v \in (-\frac{\pi}{4},\frac{\pi}{4})$ satisfying $\leftb A,B \rightb = \leftb h^+(u,v),h^-(u,v) \rightb$.
\end{lemma}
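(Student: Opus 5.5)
The plan is to separate the two unknowns by replacing the unordered pair $\leftb h^+(u,v),h^-(u,v)\rightb$ with two symmetric functions of it: the product and the ratio. Then we pass to tangent coordinates, as in \cref{prop:rational}, and reduce to a nonzero polynomial equation in one variable.

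\textbf{Step 1 (product and ratio).} From \cref{def:embedding},
\[
h^+(u,v)\,h^-(u,v)=\left(\frac{\cos(2u)}{\cos(2v)}\right)^2, \qquad \frac{h^+(u,v)}{h^-(u,v)}=\left(\frac{\cos(u+v)}{\cos(u-v)}\right)^2 .
\]
For $u,v\in(-\frac{\pi}{4},\frac{\pi}{4})$ the quantities $\cos(2u)$, $\cos(2v)$, $\cos(u\pm v)$ are all positive. So if $\leftb A,B\rightb=\leftb h^+(u,v),h^-(u,v)\rightb$, then
\[
\frac{\cos(2u)}{\cos(2v)}=r:=\sqrt{AB}
\quad\text{and}\quad
\frac{\cos(u+v)}{\cos(u-v)}=s \ \text{ for some } s\in\{\sqrt{A/B},\sqrt{B/A}\}.
\]
This leaves only two cases for $s$, and it suffices to show each case has finitely many solutions.

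\textbf{Step 2 (tangent coordinates).} Put $p=\tan u$ and $q=\tan v$. This is a bijection of $(-\frac{\pi}{4},\frac{\pi}{4})$ onto $(-1,1)$, so counting pairs $(p,q)$ is the same as counting pairs $(u,v)$. By \cref{lemma:trig}(a), as in the proof of \cref{prop:rational}, the two conditions become
\[
\frac{1-pq}{1+pq}=s, \qquad (1-p^2)(1+q^2)=r(1+p^2)(1-q^2).
\]
The first condition gives $pq=c$, where $c:=\frac{1-s}{1+s}$ is a fixed constant.

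\textbf{Step 3 (counting).} There are two cases.
\begin{itemize}
\item If $c\neq 0$, then $p\neq 0$ and $q=c/p$. Substituting and multiplying by $p^2$ gives
\[
(1-p^2)(p^2+c^2)=r(1+p^2)(p^2-c^2).
\]
The $p^4$ coefficient of the difference of the two sides is $-1-r\neq 0$, because $r>0$. So this is a nonzero quartic in $p$, with at most four roots, and each root determines $q$.
\item If $c=0$, then $p=0$ or $q=0$.
 \begin{itemize}
 \item If $p=0$, the equation becomes $1+q^2=r(1-q^2)$, which has at most two solutions $q$.
 \item If $q=0$, it becomes $1-p^2=r(1+p^2)$, which likewise has at most two solutions $p$.
 \end{itemize}
\end{itemize}
Summing over the two choices of $s$ gives a finite total, which proves the statement.

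The only step needing care is confirming that the polynomial in Step 3 is not identically zero; this is exactly where positivity of $r$ (that is, $A,B>0$) enters. The degenerate case $c=0$ is the other point to check, and it is handled directly above.
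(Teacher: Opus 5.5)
Your proof is correct. It shares the paper's first step: trade the unordered pair $\leftb h^+(u,v),h^-(u,v)\rightb$ for its product $\left(\cos(2u)/\cos(2v)\right)^2$ and its ratio $\left(\cos(u+v)/\cos(u-v)\right)^2$, and use positivity on $(-\frac{\pi}{4},\frac{\pi}{4})$ to take square roots. Where you differ is in how the variables are eliminated.

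The paper works with $x=\cos(2u)$. It expands $\cos(u\pm v)^2$ by power reduction, so the ratio condition involves $\cos(2u)\cos(2v)$ and $\sin(2u)\sin(2v)$, and squares to remove the sines. It then writes $\cos(2v)$ as a multiple of $x$ via \eqref{eq:trig product}, obtaining a quartic in $x$, and recovers $u$ up to sign and then $v$ up to sign.

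Your tangent coordinates from \cref{prop:rational} turn the ratio condition into the exact linear relation $pq=c$. This buys you two things:
\begin{itemize}
\item There is no squaring step, so no extraneous roots appear.
\item You explicitly certify that the resulting quartic in $p$ is not identically zero, via its leading coefficient $-(1+r)$. The paper never makes this check.
\end{itemize}
The paper's displayed quartic also contains slips: by \eqref{eq:trig product} one has $\cos(2v)=x/\sqrt{AB}$, so the first term should be $(1+x^2/\sqrt{AB})^2$. Its $x^4$ coefficient is positive either way, so the paper's conclusion stands.

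The price of your route is the degenerate case $c=0$, i.e.\ $A=B$, which you must treat separately, whereas the paper's polynomial covers it uniformly. You handle that case correctly.
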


\begin{proof}
    We begin by noting that
        \begin{align}
            AB = h^+(u,v)h^-(u,v) = \left(\frac{\cos(2u)}{\cos(2v)}\right)^2 &\Longrightarrow \cos(2u)^2 = AB\cos(2v)^2 \nonumber\\
            &\Longrightarrow \cos(2u) = \sqrt{AB}\cos(2v) \label{eq:trig product}
        \end{align}
    since $u,v \in (-\frac{\pi}{4},\frac{\pi}{4})$. Assume without loss of generality that,
        \begin{align}
            \frac{A}{B} = \frac{h^+(u,v)}{h^-(u,v)} = \left(\frac{\cos(u+v)}{\cos(u-v)}\right)^2 \Longrightarrow \cos(u+v)^2 = \frac{A}{B}\cos(u-v)^2. \label{eq:trig ratio}
        \end{align}
        
    By the power reduction identity \cite[pp.~145]{gelfand_trigonometry} and the cosine addition identity \cite[pp.~125]{gelfand_trigonometry},
        \begin{align}
            \cos(u\pm v)^2 = \frac{1 + \cos(2u \pm 2v)}{2} =  \frac{1+\cos(2u)\cos(2v) \pm \sin(2u)\sin(2u)}{2}\label{eq:cos double}
        \end{align}
    Applying \eqref{eq:cos double} to \eqref{eq:trig ratio},
        \begin{align}
            1+\cos(2u)\cos(2v) + \sin(2u)\sin(2u) = \frac{A}{B}\left(1+\cos(2u)\cos(2v) - \sin(2u)\sin(2u)\right)\nonumber \\
            \Longrightarrow \left(1-\frac{A}{B}\right)(1 + \cos(2u)\cos(2v)) = \left(-1-\frac{A}{B}\right)\sin(2u)\sin(2v)\label{eq:almost poly}
        \end{align}
    Now, we square both sides of \eqref{eq:almost poly} and use fact that $\sin(2x)^2 = 1 - \cos(2x)^2$ to obtain,
        \begin{align*}
            \left(1-\frac{A}{B}\right)^2(1+\cos(2u)\cos(2v))^2 + \left(1+\frac{A}{B}\right)^2(1-\cos(2u)^2)(1-\cos(2v)^2) = 0.
        \end{align*}   
    Finally, set $x = \cos(2u)$ and note that by \eqref{eq:trig product}, $\cos(2v) = \sqrt{AB}x$.
        \begin{align*}
            \left(1 - \frac{A}{B}\right)^2\left(1+\sqrt{AB}x\right)^2 + \left(1+\frac{A}{B}\right)^2 (1-x^2)(1-ABx^2) = 0.
        \end{align*}
    Thus, $x = \cos(2u)$ is the root of a quartic and so can take on at most four values. Since $u \in (-\frac{\pi}{4},\frac{\pi}{4})$, this means $u$ can take on at most eight values. And by \eqref{eq:trig product}, $u$ determines $v$ up to a sign.
\end{proof}

\begin{lemma}\label{lemma:infinite multisets}
    For any $\U$-embedding $G$ and $\epsilon > 0$, the set $\{h(H) : H \in \U^{\textup{good}}_\epsilon(G)\}$ is infinite.
\end{lemma}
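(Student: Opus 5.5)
We argue by contradiction, playing \cref{lemma:density} against \cref{lemma:finite}. Suppose the set $\{h(H) : H \in \U^{\textup{good}}_\epsilon(G)\}$ is finite, say it equals $\{M_1,\ldots,M_r\}$. By \cref{lemma:density}, $\U^{\textup{good}}_\epsilon(G)$ is infinite, so by pigeonhole some multiset $M = M_k$ satisfies $h(H) = M$ for infinitely many $H \in \U^{\textup{good}}_\epsilon(G)$. The aim is to show that only finitely many embeddings $H$ neighborly to $G$ can have $h(H) = M$.

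Fix such an $H = (V',E')$ with $h(H) = M$. Every $H \cong_N G$ has exactly $j$ edges, and its edges are the images of the edges of $G$ under $\sigma_{G,H}$. Each edge $(u',v') \in E'$ contributes a two-element sub-multiset $\leftb h^+(u',v'),h^-(u',v')\rightb$ of $M$. Since $M$ has only $2j$ entries, it has only finitely many two-element sub-multisets $\leftb A,B\rightb$, and all entries of $M$ are positive. By \cref{lemma:finite}, for each such $\leftb A,B\rightb$ there are only finitely many pairs $(u',v') \in (-\frac{\pi}{4},\frac{\pi}{4})^2$ realizing it. Let $T$ be the union over all these sub-multisets of the realizing pairs; $T$ is finite. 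Then every edge of every $H$ with $h(H) = M$ lies in $T$.

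Since $G$ is Eulerian and has no isolated vertices (by convention, or after deleting them; an isolated vertex can be moved freely and needs separate treatment), every vertex of $H$ is an endpoint of some edge. Hence $V'$ is a subset of the finite set of coordinates appearing in $T$, and $E' \subseteq T$. There are therefore only finitely many possible pairs $(V',E')$, so only finitely many $H$ with $h(H) = M$, contradicting the pigeonhole conclusion. This proves the lemma.

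The main obstacle is the bookkeeping in the last step. One has to make sure that \cref{lemma:finite} really bounds the pair $(u',v')$ including its ordering; it does, since it bounds ordered pairs of reals. One also has to handle vertices not incident to any edge, which could otherwise vary freely. If the paper's embeddings may contain isolated vertices, the fix is to observe that they do not affect $h$. In that case the argument shows that the edge-incident vertex sets are finite in number, and one instead chooses the infinitely many good $H$ with distinct non-isolated vertex data, obtained by applying \cref{lemma:density} to $G$ with its isolated vertices removed.
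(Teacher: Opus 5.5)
Your argument is correct, and its core is the same as the paper's. Finitely many multisets give finitely many values, and \cref{lemma:finite} turns these into a finite set of admissible ordered pairs (your $T$, the paper's $E_D$).

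Where you differ is the closing step. The paper pools all values into $D$ and then runs a second perturbation. It picks an embedding $H\in\U_\epsilon(G)$ with no edge in $E_D$ and shrinks to a neighbourhood $\U_\delta(H)\subseteq\U_\epsilon(G)$ on which this persists. It then applies \cref{lemma:density} there to get a good $K$ with an edge outside $E_D$, which is absurd. As written, the paper chooses this $H$ in $\U^{\textup{good}}_\epsilon(G)$, but by construction every edge of a good embedding lies in $E_D$, so it must be taken in $\U_\epsilon(G)$.

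You instead count: every $H$ with $h(H)=M$ has all its edges in $T$, so only finitely many such $H$ exist. This contradicts the infinitude of $\U^{\textup{good}}_\epsilon(G)$. Your version uses \cref{lemma:density} only as a black-box infinitude statement and needs no $\delta$-bookkeeping. The pigeonhole step is not even needed: taking $T$ over the two-element sub-multisets of all the $M_k$ at once bounds every good $H$ simultaneously.

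The price is that you need an embedding to be determined by its edge set, hence your isolated-vertex caveat. The paper's route never meets this issue, since a single edge outside $E_D$ suffices. Your fix for that case works once $\epsilon$ is shrunk below half the minimal gap between vertices of $G$. Then reinserting the isolated vertices keeps $\sigma$ the natural correspondence, so the lifted embeddings really lie in $\U_\epsilon(G)$. Shrinking costs nothing, because $\U^{\textup{good}}_{\epsilon'}(G)\subseteq\U^{\textup{good}}_{\epsilon}(G)$ for $\epsilon'\le\epsilon$.
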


Assume for sake of contradiction that the set is finite. For each $\U$-embedding $H$, let $\{h(H)\}$ denote the \textit{set} of numbers appearing in $h(H)$. Let $D = \bigcup_{H \in \U_\epsilon^\text{good}}\{h(H)\}$ where the union is a union of sets. For each pair of (possibly equal) numbers $(A,B) \in D^2$, let $E_{A,B}$ be the set of edges $(u,v)$ satisfying $\leftb A,B\rightb = \leftb h^+(u,v),h^-(u,v)\rightb$ and let $E_{D} = \bigcup_{(A,B) \in D^2}E_{A,B}$ be the set of all edges that could possibly contribute numbers appearing in $D$. By assumption, $D$ is finite and by \cref{lemma:finite} $E_{A,B}$ is finite so $E_D$ is finite as well. Choose an embedding $H \in \U_\epsilon^\text{good}(G)$ that avoids using edges in $E_{D}$, which is possible since $E_D$ is finite. Next, let $V$ be the vertex set of $G$ and let $\delta > 0$ be small enough so that $\U_\delta(H) \subseteq \U_\epsilon(G)$ and no embedding $K \in \U_\delta(H)$ uses an edge in $E_D$, again possible since $E_D$ is finite. Then, by \cref{lemma:density}, there is an embedding $K \in \U_\delta(H) \subseteq \U_{\epsilon}(G)$ such that $h(K)$ satisfies (P2). Hence $K \in \U_\epsilon^\text{good}(G)$ but by construction, $\{h(K)\} \not\subseteq D$ since no edge of $K$ appears in $E_D$. This contradicts the definition of $D$.

\begin{proof}[Proof of \cref{thm:main}]
    For $j \geq 3$, fix some $\Q$-embedding $G$ with $j$ edges and some $\epsilon >0$. Applying \cref{lemma:infinite multisets} with $\U = \Q$, the set $S = \{h(H) :H \in \U_\epsilon^{\text{good}}\}$ is infinite. By \cref{lemma:nice properties} (b), \cref{lemma:P1}, and the definition of $\U_\epsilon^{\text{good}}$, every multiset in $S$ corresponds to a normalized rational partition $\lambda$ that satisfies (P1) and (P2). Define the set $\text{pairs}_j = \{(\Psi(\lambda),\flip_j(\Psi(\lambda)):\lambda \in S\}$ and note the following for each pair $(\Psi(\lambda),\flip_j(\Psi(\mu))) \in \text{pairs}_j$:
        \begin{itemize}
            \item Both $\Psi(\lambda)$ and $\flip_j(\Psi(\lambda))$ are in $\P_{2j}(\ZZ)$ by \cref{prop:normalized} (b).
            \item $|\Psi(\lambda)| = |\flip_j(\Psi(\mu))|$ since $\lambda$ satisfies (P1) and we may apply \cref{prop:normalized} (c).
            \item $\Psi(\lambda) \neq \flip_j(\Psi(\lambda))$ since $\lambda$ satisfies (P2) and $\Psi$ is injective by \cref{prop:normalized} (a).
            \item $\pre_j(\Psi(\lambda)) = \pre_j(\flip_j(\Psi(\lambda)))$ by \cref{lemma:flip equality}.
        \end{itemize}
    We conclude that every pair in $\text{pairs}_j$ is of the form claimed to exist in \cref{thm:main}. And since $\text{pairs}_j$ in infinite by nature of $S$ being infinite, this proves \cref{thm:main}.
\end{proof}

While our proof of \cref{thm:main} is not constructive, our results do give a recipe to construct such partitions. Indeed, one may pick a $\Q$-embedding $G$ at random and if $h(G)$ satisfies (P2), send it through the map $\Psi$. The only point of failure would be if $h(G)$ does not satisfy (P2). In practice, this is highly unlikely. We ran Python code to randomly generate $1,000,000$ $\Q$-embeddings of the 3-cycle and only three had multisets that did not satisfy (P2). We also randomly generated $500,000$ $\Q$-embeddings of the 4-cycle and every single one had multisets that satisfied (P2). We encourage the reader to try to construct their own examples.

We conclude this section by explicitly computing counterexamples and making a few remarks.

\begin{example}\label{ex:final}
    Observe that the $\Q$-embeddings $G$ and $H$ appearing in \cref{ex:embeddings} both satisfy (P2). Let $\lambda = \Psi(h(G))$ and $\mu = \Psi(h(H))$. Then by the proof of \cref{thm:main}, the pairs $(\lambda, \flip_3(\lambda))$ and $(\mu, \flip_4(\mu))$ are counterexamples to the injectivity of $\pre_3$ and $\pre_4$ respectively. Explicitly, these pairs of partitions are below.\\[-30pt]
    
    \[\hspace{0.42in}\lambda= ( 79155648, 51480000, 19115096, 13426875, 10085075, 3949400),\]\\[-33pt]
    \[\flip_3(\lambda)=( 87773400,34372800,25817792,18135000,6733727,4379375),\]
    {\resizebox{5.7 in}{!}{\hspace{0.42in}$\mu=(246508808,174460000,62805600,48312000,36988875,13127688,10020010,5112250)$}}\\
    {\resizebox{5.7 in}{!}{$\flip_4(\mu)=(262553760,133956000,102245000,36287680,27782755,21371350,7693686,5445000)$}}\\[10pt]
    We can see that $|\lambda| = |\flip_3(\lambda)| = 177212094$ and $|\mu| = |\flip_4(\mu)| = 597335231$. Moreover, $\pre_3(\lambda) = \pre_3(\flip_3(\lambda))$ and $\pre_4(\mu) = \pre_4(\flip_4(\mu))$ by \cref{lemma:flip equality}. This may also be checked by a computer.
\end{example}

\begin{remark}\label{rmk:real}
    Recall that in \cite{li2026department}, it was proved that $\pre_2$ is injective on the set of partitions of $n$, and further that this may be extended to real partitions of $n$. Our construction agrees with these results. There are no embeddings with two edges and so our construction fails for $j=2$. Moreover, the proof of \cref{thm:main} may been adjusted to accommodate real partitions simply by choosing $\U = (-\frac{\pi}{4},\frac{\pi}{4})$ and defining a slightly different injective map analogous to $\Psi$.
\end{remark}

\begin{remark}
    While looking for counterexamples to \cref{conj:their conjecture} and before working out our embedding framework, we found the following particularly small counterexample via code,
    \[\lambda = (90,50,45,25,12,12), \quad \flip_3(\lambda) = (75,75,36,20,18,10).\] 
    These partitions are each others image under $\flip_3$ and so $\pre_3(\lambda) = \pre_3(\mu)$ by \cref{lemma:flip equality}. Further, they are both partitions of size $234$ meaning the pair is indeed a counterexample. Once we wrote up this section, we made an attempt to find a rational embedding $G$ such that $\Psi(h(G)) = \lambda$ but failed to do so. Since our methods tend to produce large partitions as can be seen in \cref{ex:final}, it would be interesting to the authors to find such an embedding or prove that no such embedding exists.
\end{remark}

%------------------------------------

\section{Further generalizations}

We conclude with a discussion of generalizations of this injectivity question. In \cite{ballantine2025partitions}, it was suggested that one could evaluate other symmetric polynomials, such as the monomial, complete homogeneous, and Schur polynomials, on any partition $\lambda$ and define a partition with the summands as its parts. 

For example, if we used the complete homogeneous symmetric polynomials $h_j(x_1,\ldots,x_\ell)$, we could define $\prh_j(\lambda)$ as the partition whose multiset of parts is
    \begin{align*}
        \leftb\lambda_{i_1}\cdots\lambda_{i_j}: 1 \leq i_1 \leq \cdots \leq i_j \leq \ell \rightb
    \end{align*}
It turns out that the injectivity question is much easier for $\prh_j$ than for $\pre_j$.

\begin{theorem}\label{thm:homogenous}
    For $j \geq 1$, the function $\prh_j$ is injective on the set of all partitions.
\end{theorem}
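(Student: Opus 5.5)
Suppose $\prh_j(\lambda)=\prh_j(\mu)$. I will reconstruct $\lambda$ from the multiset $\nu=\prh_j(\lambda)$ by peeling off the parts of $\lambda$ one at a time, largest first. Nothing about sizes is needed. The length is recovered first: $\ell(\nu)=\binom{\ell+j-1}{j}$ is strictly increasing in $\ell$ for $j\ge 1$, so $\ell(\lambda)=\ell(\mu)$. Since the parts of $\nu$ are products of the $\lambda_i$, I work with the multiset of all products $\lambda^\alpha$ over $\alpha\in\ZZ_{\ge0}^\ell$ with $|\alpha|=j$. This multiset is exactly $\nu$.

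The key observation concerns the largest part. We have $\nu_1=\lambda_1^j$, since each $\lambda_i\le\lambda_1$, so $\lambda_1=\nu_1^{1/j}$ is determined. Then I strip off every product involving $\lambda_1$ and induct. I intend to formalise this by a generating-function identity. Let $N(\lambda)$ denote the multiset of all monomials of degree $j$. For a multiset $M$ of positive reals, let $M^{(k)}$ denote the multiset of degree-$k$ monomials in the elements of $M$. Splitting according to how many times $\lambda_1$ is used gives
\[
\prh_j(\lambda)=\bigcup_{k=0}^{j}\lambda_1^{\,j-k}\cdot \prh_k(\lambda_2,\ldots,\lambda_\ell),
\]
where $\prh_0$ is the singleton $\leftb 1\rightb$. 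This recursion involves all lower degrees, so a cleaner route is to recover every $\prh_k$ of the tail simultaneously.

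An alternative I find cleaner is to use power sums. Consider the Dirichlet-type sum $\sum_{x\in\nu}x^s=h_j(\lambda_1^s,\ldots,\lambda_\ell^s)$ for all real $s>0$. Knowing $\nu$ determines $h_j(y_1^s,\ldots,y_\ell^s)$ for every $s$, and I would aim to show this family of values determines the multiset $\{y_i\}$. The issue is that $h_j$ alone at a single point does not determine the $y$'s, whereas the function $s\mapsto h_j(y^s)$, viewed as an exponential sum with exponents $\log$ of products, does. The exponents are simply the logs of the parts of $\nu$ with their multiplicities, which is circular. So I abandon this alternative and return to the direct greedy approach.

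The greedy approach runs by strong induction on $\ell$. Given $\nu$, set $a=\nu_1^{1/j}=\lambda_1$. Let $m$ be the multiplicity of $a$ in $\lambda$. If all parts equal $a$, we are done, since the length is known. Otherwise let $b=\lambda_{m+1}<a$. The largest part of $\nu$ not equal to a pure power $a^j$ is $a^{j-1}b$. Its multiplicity in $\nu$ is $m\cdot(\text{multiplicity of } b)$ among monomials with one factor $b$ and the rest $a$. Precisely, the number of multisets of indices of size $j$ using $j-1$ indices from the $m$ copies of $a$ and one from the copies of $b$ equals $\binom{m+j-2}{j-1}\cdot m_b$.

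The cleaner fix is to determine $m$ first. The multiplicity of $a^j$ in $\nu$ is $\binom{m+j-1}{j}$, which is strictly increasing in $m$, so $m$ is determined. Then remove from $\nu$ the sub-multiset $\prh_j$ of the first $m$ parts together with all products that use at least one $a$ and at least one smaller part. This last family is not yet known, which is the obstacle. I will therefore phrase the induction as recovering the parts one at a time in decreasing order. Suppose $\lambda_1\ge\cdots\ge\lambda_{r}$ are known, for $r\ge 1$. Let $R$ be the multiset of products $\lambda^\alpha$ with $|\alpha|=j$ supported on indices $\le r$. This $R$ is computable and satisfies $R\subseteq\nu$. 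Every product involving some index $>r$ is at most $\lambda_1^{j-1}\lambda_{r+1}$, and equality is attained by $\lambda_1^{j-1}\lambda_{r+1}$. Hence the largest element of $\nu\setminus R$ equals $\lambda_1^{j-1}\lambda_{r+1}$, and this determines $\lambda_{r+1}$.

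This step is the heart of the proof, and I should double-check the inequality. For $\alpha$ with some index $i>r$ used, $\lambda^\alpha\le\lambda_{r+1}\cdot\lambda_1^{j-1}$, since the remaining $j-1$ factors are each at most $\lambda_1$. This yields a full reconstruction, with $\ell$ known, and so $\lambda=\mu$.
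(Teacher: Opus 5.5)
Your final greedy argument is correct and is essentially the paper's proof. You recover $\ell$ from $\binom{\ell+j-1}{j}$, set $\lambda_1=\nu_1^{1/j}$, and then, with $\lambda_1,\ldots,\lambda_r$ known, read off $\lambda_1^{j-1}\lambda_{r+1}$ as the largest element of $\nu\setminus\prh_j(\lambda_1,\ldots,\lambda_r)$, exactly as in the paper's induction; your explicit checks of the length and of the bound $\lambda^\alpha\le\lambda_1^{j-1}\lambda_{r+1}$ only fill in details the paper states tersely, and in a write-up you should delete the abandoned detours (the recursion, the power-sum idea, and the multiplicity counting).
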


\begin{proof}
    Let $\lambda$ and $\mu$ be partitions such that $\prh_j(\lambda) = \prh_j(\mu)$. By definition $\ell(\lambda) = \ell(\mu)$, call this length $\ell$. It suffices to show that $\lambda_i = \mu_i$ for all $1 \leq i \leq \ell$. We proceed by induction. For the basecase, observe that,
        \begin{align*}
            \lambda_1 = \sqrt[j]{\max(\prh_j(\lambda))} = \sqrt[j]{\max(\prh_j(\mu))} = \mu_1.
        \end{align*}
    Assume that $\lambda_i = \mu_i$ for all $1 \leq i \leq k < \ell$. This implies that $\prh_j(\lambda^-) = \prh_j(\mu^-)$ where $\lambda^- = (\lambda_1,\ldots,\lambda_k)$ and $\mu^- = (\mu_1,\ldots,\mu_k)$. Next, notice that
        \begin{align*}
            \lambda_1^{j-1}\lambda_{k+1} = \max(\prh_j(\lambda)\setminus\prh_j(\lambda^-)) = \max(\prh_j(\mu)\setminus\prh_j(\mu^-)) = \mu^{j-1}\mu_{k+1}.
        \end{align*}
    Since $\lambda_1 = \mu_1$, it follows that $\lambda_{k+1} = \mu_{k+1}$, completing the induction.
\end{proof}

It is well known that the elementary and complete homogeneous symmetric polynomials have simple expansions in the monomial symmetric polynomial basis (we refer the reader to \cite{macdonald1998symmetric} for details):
    \begin{align*}
        e_j = m_{1^j}\quad \text{and} \quad h_j = \sum_{\lambda: |\lambda| = j}m_\lambda.
    \end{align*}
In this way, both are examples of homogeneous symmetric polynomials in which the coefficient of every monomial is zero or one; we call such symmetric polynomials \textbf{simple}. Since $\pre_j$ and $\prh_j$ have distinct behavior with respect to injectivity, it may be interesting to determine which simple symmetric polynomials give us injective maps analogous to $\pre_j$ and $\prh_j$.

\begin{question}
    Let $f$ be a simple symmetric polynomial. Define $\prf(\lambda)$ as the partition whose multiset of parts is $\leftb \lambda^\alpha : \alpha \in \supp(f)\rightb$. For which $f$ is $\prf$ injective on the set of partitions? What about on the set of partitions of size $n$?
\end{question}

Lastly, we consider the Schur polynomials $s_\mu(x_1,\ldots,x_\ell)$. They are the generating functions for semistandard young tableaux; fillings of the diagram of $\mu$ with numbers $\{1,\ldots,\ell\}$ such that the rows weakly increase and the columns strictly increase. We denote the set of these as $\text{SSYT}(\mu,\ell)$ and refer the reader to \cite{macdonald1998symmetric} for a more formal definition. Then, we could define $\prs_\mu(\lambda)$ as the partition whose multiset of parts is,
    \begin{align*}
        \leftb \lambda_1^{\#\text{ of 1's in }T}\cdots\lambda_\ell^{\#\text{ of $\ell$'s in }T} : T\in \text{SSYT}(\mu,\ell)\rightb.
    \end{align*}
Another well known fact is that the elementary symmetric polynomials and the complete homogeneous polynomials are special cases of Schur polynomials,
    \[
        s_{(1^j)} = e_j \quad\text{and}\quad s_{(j)} = h_j.
    \]
See \cite{macdonald1998symmetric}. Thus, $\prs_{1^j}$ is \textit{not} injective by \cref{thm:main} while $\prs_{j}$ is by \cref{thm:homogenous}.\vspace{-10pt}
    \begin{question}\label{question:schur}
        For which partitions $\mu$ is $\prs_\mu$ injective on the set of partitions? What about on the set of partitions of size $n$?
    \end{question}

Partial progress has already been made on \cref{question:schur}; Thomas and Tung have proven $\prs_{(2,1)}$ is injective on the set of partitions of $n$ \cite[Theorem 1.7]{thomas2026injectivitysymmetricpolynomialmaps} using techniques similar to Li in \cite{li2026department}.

\section*{Acknowledgments}
Much of this research was conducted through the Directed Research Program (DRP) organized by the Women in Mathematics (WiM) chapter at the University of Waterloo. We are grateful for their support. The second author was partially supported by NSERC grant RGPIN-2021-02568.
%-----------------------------------

\bibliographystyle{amsplain}
\bibliography{references}

\end{document}